\newcommand{\id}{\operatorname{id}} 
\newcommand{\Aut}{\operatorname{Aut}}
\newcommand{\Span}{\operatorname{Span}}
\newcommand{\Tr}{\operatorname{Tr}}
 \newcommand{\supp}{\operatorname{supp}}
\newcommand{\Var}{\operatorname{Var}}
   \theoremstyle{plain}
   \newtheorem{thm}{Theorem}[section]
   \newtheorem{prop}[thm]{Proposition}
   \newtheorem{lemma}[thm]{Lemma}  
   \newtheorem{cor}[thm]{Corollary}
   \theoremstyle{definition}
   \newtheorem{example}[thm]{Example}
   \theoremstyle{remark}
   \newtheorem{obs}[thm]{Observation}
   \newtheorem{remark}[thm]{Remark}
\definecolor{mybgcolor}{gray}{0.8}
\definecolor{myframecolor}{rgb}{.647,.129,.149}
   \numberwithin{equation}{section}
        \date{\today}
\title[Diagonality]{Diagonality of actions
  and KMS weights}  
\author{Johannes Christensen and Klaus Thomsen}
\date{\today}
\email{matkt@math.au.dk   \qquad   johannes@math.au.dk}
\address{Department of Mathematics, Aarhus
  University, Ny Munkegade 118, 8000 Aarhus C, Denmark}
\begin{document}

\maketitle

\section{Introduction}

Recent years have seen an increasing interest in the investigation of
KMS states for one-parameter actions on $C^*$-algebras. While the
original motivation for the introduction of KMS states came from the
interpretation of these states as equilibrium states in models from
quantum statistical mechanics, the renewed interest stems also
from   
more purely mathematical considerations, where the KMS states have
been related to objects and structures from other fields, such as number theory or
dynamical systems. In the present paper we investigate relations
between properties of the KMS states and properties of 
the one-parameter action giving rise to them. As we shall
now explain, we show that the existence of a 'diagonal' KMS state or weight
implies that the action itself must be 'diagonal'.

For most if not all the one-parameter actions on
$C^*$-algebras for which we have been able to determine the structure
of the KMS states
 or KMS weights, the underlying $C^*$-algebra can be
 presented as the $C^*$-algebra of a locally compact groupoid, as
 introduced by Renault in \cite{Re1}, and the action described as
 arising from a continuous real-valued homomorphism on the groupoid by
 a canonical procedure also introduced in \cite{Re1}. For this reason the
 results of Neshveyev, \cite{N}, which extend results of Renault and give a general and abstract
 description of the KMS states for such actions on a groupoid
 $C^*$-algebra is of outmost importance.  In the following
 we call these
 actions \emph{diagonal}.

When the groupoid and the associated $C^*$-algebra is fixed, it is
certainly not all one-parameter actions that are diagonal. It follows
from Neshveyev's theorem, Theorem 1.3 in \cite{N}, that a diagonal action has the
property that if a KMS
state exists, there will also be one which factorises through the canonical
conditional expectation onto the abelian $C^*$-subalgebra generated by
the continuous compactly supported functions on the unit space. In the
following we call these states \emph{diagonal}. The present work
sprang from the realisation that in many cases the property that
there is a diagonal KMS state, characterises the diagonal actions. That
is, for many groupoid $C^*$-algebras a one-parameter action is
diagonal if and only if the action admits a diagonal KMS state. The
simplest example of this is perhaps the following.

Consider the $C^*$-algebra $M_n$ of complex $n$ by $n$ matrices. A
continuous one-parameter group $\alpha$ of automorphisms on $M_n$ is inner in
the sense that there is a self-adjoint matrix $A \in M_n$ such that
$$
\alpha_t(B) = e^{itA}Be^{-itA}
$$
for all $t \in \mathbb R$ and all $B \in M_n$. For each $\beta \in
\mathbb R$ there is a unique $\beta$-KMS state $\omega_{\beta}$ for
$\alpha$ given by
$$
\omega_{\beta}(B) = \frac{\Tr \left(e^{-\beta A} B\right)}{\Tr
  \left(e^{-\beta A}\right)} .
$$
It can be shown that for $\beta \neq 0$ the state $\omega_{\beta}$
factorises through the canonical (and unique) conditional expectation
from $M_n$ onto the $C^*$-subalgebra of diagonal matrices if and only
if $A$ is diagonal. It is this fact we will generalize. For this note
that $M_n$ is the groupoid $C^*$-algebra of the groupoid $\mathcal G = \{1,2,3, \cdots
,n\} \times \{1,2,3, \cdots,n \}$ with operations
$$
(a,b)(b,c) = (a,c) \ \text{and} \ (a,b)^{-1} = (b,a) .
$$
When $M_n$ is identified with the $C^*$-algebra $C^*(\mathcal G)$ of
$\mathcal G$, the diagonal matrices in $M_n$ constitute the
$C^*$-algebra $C\left(\mathcal G^{(0)}\right)$ of (continuous) functions on $\mathcal G$ whose support
is contained in the unit space
$$
\mathcal G^{(0)} = \left\{ (k,k) : \ k \in \{1,2,\cdots , n\} \right\}
$$
of $\mathcal G$. In this picture the conditional expectation onto the
diagonal matrices is the map
$$
P : C^*(\mathcal G) \to C\left(\mathcal G^{(0)}\right)
$$
which restricts functions to $\mathcal G^{(0)}$. Furthermore, the
matrix $A$ will be diagonal if and only if there is a groupoid
homomorphism $c : \mathcal G \to \mathbb R$ such that
\begin{equation}\label{intro1}
\alpha_t(f)(a,b) = e^{ic(a,b)t} f(a,b)
\end{equation}
for all $t \in \mathbb R, \ (a,b) \in \mathcal G$ and all $f \in
C^*(\mathcal G)$. Because the whole setup is so transparent in this
case, we can easily conclude that there is an equivalence between the
following conditions:
\begin{enumerate}
\item[1)] $\alpha$ is diagonal in the sense that there is a groupoid
homomorphism $c : \mathcal G \to \mathbb R$ such that (\ref{intro1})
holds.
\item[2)] There is a $\beta \neq 0$ and a $\beta$-KMS state
  $\omega_{\beta}$ for
  $\alpha$ which is diagonal in the sense that it factorises through
  the conditional expectation $C^*(\mathcal G) \to C\left(\mathcal
    G^{(0)}\right)$.
\item[3)] $\alpha_t(f) = f$ for all $t \in \mathbb R$ and all $f\in
  C\left(\mathcal G^{(0)}\right)$.
\item[4)] All $\beta$-KMS states of $\alpha$, for $\beta \neq 0$, are diagonal.
\end{enumerate}
Our main result is that these equivalences hold much more generally
as we shall now explain.

\section{Notation and main result}

Let $\mathcal G$ be a second countable locally compact Hausdorff \'etale
groupoid with unit
space $\mathcal G^{(0)}$. Let $r : \mathcal G \to \mathcal G^{(0)}$ and $s : \mathcal G \to \mathcal G^{(0)}$ be
the range and source maps, respectively. For $x \in \mathcal G^{(0)}$ put $\mathcal G^x = r^{-1}(x), \ \mathcal G_x = s^{-1}(x) \ \text{and} \ \mathcal G^x_x =
s^{-1}(x)\cap r^{-1}(x)$. Note that $\mathcal G^x_x$ is a group, the \emph{isotropy group} at $x$. The space $C_c(\mathcal G)$ of continuous compactly supported
functions is a $*$-algebra when the product is defined by
\begin{equation*}\label{k15}
(f_1 * f_2)(g) = \sum_{h \in \mathcal G^{r(g)}} f_1(h)f_2(h^{-1}g)
\end{equation*}
and the involution by $f^*(g) = \overline{f\left(g^{-1}\right)}$.
To define the \emph{reduced groupoid
  $C^*$-algebra} $C^*_r(\mathcal G)$, let $x\in \mathcal G^{(0)}$. There is a
representation $\pi_x$ of $C_c(\mathcal G)$ on the Hilbert space $l^2(\mathcal G_x)$ of
square-summable functions on $\mathcal G_x$ given by 
\begin{equation*}\label{pix}
\pi_x(f)\psi(g) = \sum_{h \in \mathcal G^{r(g)}} f(h)\psi(h^{-1}g) .
\end{equation*}
$C^*_r(\mathcal G)$ is the completion of $C_c(\mathcal G)$ with respect to the norm
$$
\left\|f\right\|_r = \sup_{x \in \mathcal G^{(0)}} \left\|\pi_x(f)\right\| .
$$  
Note that $C^*_r(\mathcal G)$ is separable since we assume that the
topology of $\mathcal G$ is second countable.

We shall here be concerned not only with KMS states, but more
generally with KMS weights. Let $A$ be a $C^*$-algebra and $A_{+}$ the convex cone of positive
elements in $A$. A \emph{weight} on $A$ is a map $\psi : A_{+} \to
[0,\infty]$ with the properties that $\psi(a+b) = \psi(a) + \psi(b)$
and $\psi(\lambda a) = \lambda \psi(a)$ for all $a, b \in A_{+}$ and all
$\lambda \in \mathbb R, \ \lambda > 0$. By definition $\psi$ is
\emph{densely defined} when $\left\{ a\in A_{+} : \ \psi(a) <
  \infty\right\}$ is dense in $A_{+}$ and \emph{lower semi-continuous}
when $\left\{ a \in A_{+} : \ \psi(a) \leq \alpha \right\}$ is closed
for all $\alpha \geq 0$. We will use \cite{Ku}, \cite{KV}  as our source for
information on weights, and we say that a weight is \emph{proper}
when it is non-zero, densely defined and lower semi-continuous. Let $\psi$ be a proper weight on $A$. Set $\mathcal N_{\psi} = \left\{ a \in A: \ \psi(a^*a) < \infty
\right\}$ and note that 
\begin{equation*}\label{f3}
\mathcal M_{\psi} = \Span \left\{ a^*b : \ a,b \in
  \mathcal N_{\psi} \right\}
\end{equation*} 
is a dense
$*$-subalgebra  of $A$, and that there is a unique well-defined linear
map $\mathcal M_{\psi} \to \mathbb C$ which
extends $\psi : \mathcal M_{\psi}\cap A_+ \to
[0,\infty)$. We denote also this densely defined linear map by $\psi$.

Let $\alpha : \mathbb R \to \Aut A$ be a continuous one-parameter group of automorphisms on
$A$. Let $\beta \in \mathbb R$. Following \cite{C} we say that a proper weight
$\psi$ on $A$ is a \emph{$\beta$-KMS
  weight} for $\alpha$ when
\begin{enumerate}
\item[i)] $\psi \circ \alpha_t = \psi$ for all $t \in \mathbb R$, and
\item[ii)] for every pair $a,b \in \mathcal N_{\psi} \cap \mathcal
  N_{\psi}^*$ there is a continuous and bounded function $F$ defined on
  the closed strip $D_{\beta}$ in $\mathbb C$ consisting of the numbers $z \in \mathbb C$
  whose imaginary part lies between $0$ and $\beta$, and is
  holomorphic in the interior of the strip and satisfies that
$$
F(t) = \psi(a\alpha_t(b)), \ F(t+i\beta) = \psi(\alpha_t(b)a)
$$
for all $t \in \mathbb R$. 
\end{enumerate}
Compared to \cite{C} we have changed the
orientation in order to have the same sign convention as in \cite{BR},
for example. It will be important for us that there is an alternative
characterisation of when a proper weight is a $\beta$-KMS
weight. Specifically, by Proposition 1.11 in \cite{KV} a proper weight
$\psi$ is a $\beta$-KMS weight for $\alpha$ if and only if it is
$\alpha$-invariant (as in i) above) and
\begin{equation}\label{not1}
\psi(a^*a) =
\psi\left(\alpha_{-\frac{i\beta}{2}}(a)\alpha_{-\frac{i\beta}{2}}(a)^*\right)
\end{equation}
for all $a$ in the domain $D\left(\alpha_{-\frac{i\beta}{2}}\right)$
of $\alpha_{-\frac{i\beta}{2}}$; the closure of the restriction of
$\alpha_{-\frac{i\beta}{2}}$ to the analytic elements for $\alpha$,
cf. \cite{Ku}. A $\beta$-KMS weight $\psi$ with the property that 
$$
\sup \left\{ \psi(a) : \ 0 \leq a \leq 1 \right\} = 1
$$
will be called a \emph{$\beta$-KMS state}.

Returning to the case $A = C^*_r(\mathcal G)$, note that the map $C_c(\mathcal G) \to C_c\left(\mathcal G^{(0)}\right)$ which
restricts functions to $\mathcal G^{(0)}$ extends to a conditional expectation $P
: C^*_r(\mathcal G) \to C_0\left(\mathcal G^{(0)}\right)$. Via $P$ a
regular Borel measure $m$ on $\mathcal G^{(0)}$ gives rise to a weight
$\varphi_m : C^*_r(\mathcal G)_+ \to [0,\infty]$ defined by the
formula
\begin{equation}\label{f1}
\varphi_m(a) = \int_{\mathcal G^{(0)}} P(a) \ dm .
\end{equation}
It follows from Fatou's lemma that $\varphi_{m}$ is lower
semi-continuous. Since $\varphi_{m}(f a f) < \infty$ for every
non-negative function $f$ in $C_c(\mathcal G^{(0)})$, it follows that $\varphi_m$
is also densely defined, i.e. $\varphi_m$ is a proper weight on
$C^*_r(\mathcal G)$ if and only if $m$ is not the zero measure. In the
following we say that a proper weight $\psi$ on $C^*_r(\mathcal G)$ is
\emph{diagonal} when $\psi = \varphi_m$ for some regular Borel measure
$m$ on $\mathcal G^{(0)}$. By the Riesz representation
theorem this occurs if and only if $\psi \circ P = \psi$.

Given a continuous homomorphism $c : \mathcal G \to
  \mathbb R$ there is a continuous one-parameter group $\sigma^c$ on $ C^*_r(\mathcal G)$ such that
\begin{equation}\label{44r}
\sigma^c_t(g)(\xi) = e^{it c(\xi)} g(\xi)
\end{equation}
for all $t\in \mathbb R$, all $g \in C_c(\mathcal G)$ and all $\xi \in
\mathcal G$, cf. \cite{Re1}. A one-parameter action of this kind will
be called \emph{diagonal} in the following. We can then
formulate our main result as follows.

\begin{thm}\label{iff} Let $\mathcal G$ be a locally compact second
  countable Hausdorff
  \'etale groupoid such that for at least one element $x \in \mathcal
  G^{(0)}$ the isotropy group $\mathcal G_x^x$ is trivial, i.e. $\mathcal
  G_x^x = \{x\}$, and that $\mathcal G$ is minimal in the sense that $s(r^{-1}(y))$  
is dense in $\mathcal G^{(0)}$ for all $y \in \mathcal
G^{(0)}$. Furthermore, assume that $\mathcal G^{(0)}$ is totally
disconnected.

Let $\alpha= (\alpha_t)_{t\in \mathbb R}$ be a continuous
  one-parameter group of automorphisms on $C^*_r(\mathcal G)$ and
  assume that for some $\beta_0 \neq 0$ there is a $\beta_0$-KMS
  weight for
  $\alpha$.

The following are equivalent:
\begin{enumerate}
\item[1)]  There is a $\beta_1 \neq 0$ and a diagonal $\beta_1$-KMS
  weight for $\alpha$.
\item[2)] Whenever $\beta \neq 0$ and there is a $\beta$-KMS weight
  for $\alpha$, there is also a diagonal $\beta$-KMS weight for $\alpha$. 
\item[3)] $\alpha_t(f) = f$ for all $t \in \mathbb R$ and all $f\in
  C_0\left(\mathcal G^{(0)}\right)$.
\item[4)] $\alpha_t\left(C_0\left(\mathcal G^{(0)}\right)\right) \subseteq
    C_0\left(\mathcal G^{(0)}\right)$ for all $t\in \mathbb R$. 
\item[5)] $\alpha$ is diagonal.
\end{enumerate}
\end{thm}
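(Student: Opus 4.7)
My plan is to establish the equivalences via a cycle of implications, handling the easy ones first. The implications $(3) \Rightarrow (4)$ and $(2) \Rightarrow (1)$ are immediate (the latter by applying $(2)$ at $\beta_0$, which is a $\beta$ for which a KMS weight exists by hypothesis); $(5) \Rightarrow (3)$ follows from (\ref{44r}) because $c$ vanishes on $\mathcal{G}^{(0)}$; and $(5) \Rightarrow (2)$ follows from Neshveyev's theorem (Theorem~1.3 of \cite{N}), which converts the given $\beta$-KMS weight for $\sigma^c$ into a quasi-invariant measure on $\mathcal{G}^{(0)}$ with Radon--Nikodym cocycle $e^{-\beta c}$ and produces a diagonal $\beta$-KMS weight through the formula (\ref{f1}).

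For $(4) \Rightarrow (3)$, I would use that $\mathcal{G}^{(0)}$ is totally disconnected and locally compact Hausdorff, so $C_0(\mathcal{G}^{(0)})$ is the closed linear span of its projections, together with the fact that distinct projections in a commutative $C^*$-algebra differ by at least $1$ in norm. Hence the continuous path $t \mapsto \alpha_t(p)$, which lies in the discrete set of projections of $C_0(\mathcal{G}^{(0)})$ by hypothesis, must be constant; density of the span of the projections then yields $(3)$.

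The heart of the theorem is $(1) \Rightarrow (3)$. Let $\psi = \varphi_m$ be a diagonal $\beta_1$-KMS weight. A preliminary lemma based on the minimality of $\mathcal{G}$ should show that $m$ has full support. For arbitrary $f_1, f_2 \in C_c(\mathcal{G}^{(0)}) \subseteq \mathcal{N}_\psi \cap \mathcal{N}_\psi^*$, the bounded KMS function $F$ on the closed strip $D_{\beta_1}$ satisfies
\begin{equation*}
F(t) = \psi(f_1 \alpha_t(f_2)) = \int f_1 P(\alpha_t(f_2))\, dm = \psi(\alpha_t(f_2) f_1) = F(t+i\beta_1),
\end{equation*}
using the diagonality of $\psi$, the fact that $P$ is a $C_0(\mathcal{G}^{(0)})$-bimodule map, and commutativity of $C_0(\mathcal{G}^{(0)})$. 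By Schwarz reflection $F$ extends to a bounded entire function periodic with period $i\beta_1$, hence is constant by Liouville, with value $F(0) = \int f_1 f_2\, dm$. Since $f_1$ is arbitrary and $m$ has full support, this forces $P(\alpha_t(f_2)) = f_2$ for every $f_2 \in C_c(\mathcal{G}^{(0)})$. Specializing to $f_2 = \chi_U$ for a compact open $U$, the projection $p := \alpha_t(\chi_U)$ satisfies $P(p) = \chi_U$; the expansion $(p-\chi_U)^2 = p - p\chi_U - \chi_U p + \chi_U$ together with the bimodule property gives $P((p-\chi_U)^2) = 0$, and the faithfulness of $P$ on $C^*_r(\mathcal{G})_+$ forces $p = \chi_U$. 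Total disconnectedness then yields $(3)$.

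For $(3) \Rightarrow (5)$, the assumptions ensure that $\mathcal{G}$ is topologically principal (the orbit of a trivial-isotropy point is dense by minimality), so $C_0(\mathcal{G}^{(0)}) \subseteq C^*_r(\mathcal{G})$ is a Cartan subalgebra in the sense of Renault. For $f \in C_c(S)$ with $S$ an open bisection, $\alpha_t(f)$ is a normalizer of $C_0(\mathcal{G}^{(0)})$ with $\alpha_t(f)^*\alpha_t(f) = f^*f$ and $\alpha_t(f)\alpha_t(f)^* = ff^*$, because $\alpha_t$ fixes $C_0(\mathcal{G}^{(0)})$ pointwise. Exploiting topological principality to identify bisections through the partial homeomorphism they induce on the dense set of trivial-isotropy points, one shows that $\alpha_t(f)$ is supported on $S$, hence of the form $\lambda_t \cdot f$ for a continuous unimodular function $\lambda_t : S \to \mathbb{T}$; continuity in $t$ and the group law give $\lambda_t(\xi) = e^{itc(\xi)}$ for a continuous real function $c$ on $S$, and multiplicativity of $\alpha_t$ across composable bisections forces these local expressions to assemble into a continuous groupoid homomorphism $c : \mathcal{G} \to \mathbb{R}$ with $\alpha = \sigma^c$. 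I expect the main obstacle to lie in this last implication, in particular in proving that $\alpha_t(f)$ cannot slip off the bisection supporting $f$ and in verifying that the locally defined phases patch consistently into a global cocycle.
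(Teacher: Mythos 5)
Your overall architecture matches the paper's (which proves $1)\Rightarrow 3)\Rightarrow 5)\Rightarrow 2)\Rightarrow 1)$ together with $3)\Leftrightarrow 4)$), and your argument for $1)\Rightarrow 3)$ is a genuinely different and attractive alternative. The paper instead proves (Proposition \ref{4}) that a \emph{faithful} KMS weight factoring through a conditional expectation onto \emph{any} abelian subalgebra forces the flow to fix that subalgebra, via Gaussian smoothing operators $Q_k$, the identity $\omega\left(a\left[Q_k(f)-\gamma_{-i\beta}(Q_k(f))\right]b\right)=0$ and Liouville applied to $z\mapsto\gamma_z(Q_k(f))$. Your two-sided boundary-value trick together with the $P\left((p-\chi_U)^2\right)=0$ step is shorter, at the price of using total disconnectedness, which the paper's Proposition \ref{4} avoids. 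Your $3)\Rightarrow 5)$ is the paper's Proposition \ref{15} recast in Cartan-subalgebra language; the paper runs a hands-on argument with Renault's embedding $j:C^*_r(\mathcal G)\to C_0(\mathcal G)$, and the two obstacles you flag but do not resolve are exactly the content of Observations \ref{16}--\ref{20}, so this implication remains a sketch.

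Two steps you treat as routine are genuine gaps. First, the ``preliminary lemma'' that $m$ has full support does not follow from minimality of $\mathcal G$ alone: at that stage you have no relation between $m$ and the groupoid dynamics, since you do not yet know $\alpha$ is diagonal. Full support of $m$ is equivalent to faithfulness of $\varphi_m$, and the paper's route (Corollary \ref{77}) is to show that every KMS weight on a \emph{simple} $C^*$-algebra is faithful --- itself a nontrivial Cauchy--Schwarz and lower-semicontinuity argument --- and then to invoke simplicity of $C^*_r(\mathcal G)$, which needs \emph{both} minimality and the unit with trivial isotropy. Without this, your integral identity only gives $P(\alpha_t(f_2))=f_2$ $m$-almost everywhere, which is not enough for the faithfulness-of-$P$ step. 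Second, for $5)\Rightarrow 2)$ you cite Neshveyev's Theorem 1.3, but that theorem concerns KMS \emph{states} on the \emph{full} groupoid $C^*$-algebra. Passing to proper KMS \emph{weights} on $C^*_r(\mathcal G)$ is a substantial part of the paper (Theorem \ref{Nversion1} and Corollaries \ref{65}, \ref{75}): one pulls the weight back along $\pi:C^*(\mathcal G)\to C^*_r(\mathcal G)$, uses total disconnectedness to produce an increasing approximate unit of projections $p_n$ in $C_c\left(\mathcal G^{(0)}\right)$ with $\phi(p_n)<\infty$ (Lemma \ref{53}), cuts down to states on $p_nC^*(\mathcal G)p_n$, patches the resulting data, and finally verifies via Proposition 2.1 of \cite{Th3} that the resulting quasi-invariant measure really defines a KMS weight on the reduced algebra by the formula (\ref{f1}). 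None of this is automatic from the state version.
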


Some of the (non-trivial) implications hold with fewer assumptions. Specifically,
$1) \Rightarrow 3)$ holds without the assumption that the unit
space is totally disconnected by Proposition \ref{4}, and the
implication $3) \Rightarrow 5)$ holds assuming only that
the points with trivial isotropy are dense in $\mathcal G^{(0)}$
(i.e. if $\mathcal G$ is topologically principal) by Proposition
\ref{15}. The implication $5) \Rightarrow 2)$ holds whenever $\mathcal
G^{(0)}$ is totally disconnected, without any further assumptions, as it follows from Corollary
\ref{75}. It may be that this implication is true in general and if so
the theorem with 4) removed is true also when $\mathcal G^{(0)}$ is not totally
disconnected. However, the first two assumptions on $\mathcal G$ which are
equivalent to topological principality and minimality of $\mathcal G$
are certainly necessary for the implication $3) \Rightarrow 1)$ to
hold, cf. Example \ref{29}. Finally, the gauge action on the $C^*$-algebra of a
strongly connected (row-finite) graph with infinite Gurevich entropy does not admit
any KMS weights at all, cf. \cite{Th3}, showing that it is necessary to
assume the existence of some KMS-weight for the implication $5)
\Rightarrow 1)$ to hold.

\section{Neshveyev's theorem for KMS weights}

\begin{lemma}\label{53}Let $A$ be a $C^*$-algebra, $\alpha$ a
  continuous one-parameter group of automorphisms on $A$ and $\psi$ a KMS weight
  for $\alpha$. Let $p \in A$ be a projection in the fixed point
  algebra of $\alpha$. Then $\psi(p) <\infty$.
\end{lemma}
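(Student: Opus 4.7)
The plan is to produce a projection $q \in A$ that is analytic for $\alpha$, lies in $\mathcal{M}_\psi$, and is sufficiently close to $p$ in norm, and then exploit the KMS identity (\ref{not1}) together with the fact that $p$ is analytic with $\alpha_z(p) = p$ for all $z$ to bound $\psi(pqp)$ by $\psi(q)$. Monotonicity of $\psi$ combined with closeness of $q$ to $p$ will then force $\psi(p) < \infty$.

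To construct $q$, I would start from density of $\mathcal{M}_\psi^{+}$ in $A_{+}$ and pick $e \in \mathcal{M}_\psi^{+}$ with $\|e - p\|$ as small as needed. To make $e$ analytic for $\alpha$ without spoiling the norm estimate or the finite-weight property, I would smooth by a Gaussian: $e' := \int_{\mathbb{R}} \sqrt{n/\pi}\, e^{-nt^2}\, \alpha_t(e)\, dt$. Invariance of $\psi$ gives $\psi(e') = \psi(e) < \infty$; the entire extension of the Gaussian with suitable decay on horizontal strips makes $e'$ analytic (a standard argument); and $\alpha_t(p) = p$ yields $\|e' - p\| \le \|e - p\|$. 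Arranging $\|e' - p\| < 1/8$, the spectrum of $e'$ lies in $[0, 1/8] \cup [7/8, 9/8]$, and the holomorphic functional calculus with a function $f$ holomorphic on a neighbourhood of $\sigma(e')$ that is $\equiv 0$ near $[0, 1/8]$ and $\equiv 1$ near $[7/8, 9/8]$ produces a self-adjoint projection $q := f(e')$. Since the analytic elements form a $*$-subalgebra closed under holomorphic functional calculus, $q$ is analytic. The estimate $f(t) \le 2t$ on $\sigma(e')$ gives $q \le 2 e'$, so $\psi(q) \le 2 \psi(e') < \infty$; and because $f(p) = p$, continuity of the functional calculus allows us also to arrange $\|q - p\| < 1/4$ by shrinking the initial $\|e - p\|$.

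With such $q$ in hand, both $q$ and $qp$ are analytic, and $\alpha_{-i\beta/2}(qp) = \alpha_{-i\beta/2}(q)\, p$ because $\alpha_z(p) = p$. Applying (\ref{not1}) to $a = qp$ and using $q^2 = q = q^*$,
\begin{equation*}
\psi(pqp) \;=\; \psi\bigl((qp)^*(qp)\bigr) \;=\; \psi\bigl(\alpha_{-i\beta/2}(q)\, p\, \alpha_{-i\beta/2}(q)^*\bigr) \;\le\; \psi\bigl(\alpha_{-i\beta/2}(q)\alpha_{-i\beta/2}(q)^*\bigr),
\end{equation*}
where the inequality uses $p \le 1$. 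A second application of (\ref{not1}) with $a = q$ identifies the last quantity with $\psi(q^*q) = \psi(q)$, whence $\psi(pqp) \le \psi(q) < \infty$. Since $\|pqp - p\| = \|p(q - p)p\| \le \|q - p\| < 1/4$, we have $pqp \ge (3/4)\, p$ in the corner $pAp$ and hence in $A$; monotonicity then gives $\psi(p) \le (4/3)\, \psi(pqp) \le (4/3)\, \psi(q) < \infty$.

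The main obstacle is the first step, namely obtaining a projection $q$ that is simultaneously analytic for $\alpha$, of finite weight, and close enough to $p$. The Gaussian smoothing delivers analyticity without damaging the finite weight or the distance to $p$, and the spectral gap it creates near $1/2$ is precisely what the holomorphic functional calculus needs to return a genuine projection while preserving analyticity.
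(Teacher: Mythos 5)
Your overall strategy is exactly the paper's: approximate $p$ by a positive element of finite weight, regularise with a Gaussian to obtain an analytic element without losing the finite-weight property or the distance to $p$, use the KMS identity (\ref{not1}) together with $\alpha_z(p)=p$ and $p\le 1$ to get $\psi(pXp)\le\psi(X)<\infty$, and finish with the spectral estimate $pXp\ge c\,p$. All of those steps are fine (note that lower semi-continuity and invariance only give $\psi(e')\le\psi(e)$ directly, via Riemann sums, but that is all you use).

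The one genuine gap is the sentence ``the analytic elements form a $*$-subalgebra closed under holomorphic functional calculus.'' That is not a standard fact, and it is false in general: take $A=C(\mathbb T)$ with $\alpha_t(g)(w)=g(e^{it}w)$; then $a(w)=w-2$ is entire analytic and invertible, but $a^{-1}(w)=-\sum_{k\ge 0}2^{-k-1}w^k$ satisfies $\|\alpha_{-is}(a^{-1})\|\to\infty$ as $s\uparrow\log 2$, so $a^{-1}=f(a)$ with $f(\lambda)=\lambda^{-1}$ is not entire analytic and does not even lie in $D(\alpha_{-is})$ for $s\ge\log 2$. The obstruction is that the resolvent $(\lambda-e')^{-1}$ appearing in the contour integral defining $f(e')$ need not be analytic for $\alpha$, since $\lambda-\alpha_z(e')$ can fail to be invertible for non-real $z$. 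So your $q$ is not known to lie in $D\bigl(\alpha_{-i\beta/2}\bigr)$, and the application of (\ref{not1}) to $a=qp$ is unjustified. Fortunately the projection is a superfluous detour: apply (\ref{not1}) directly to $a=e'p$ (which is analytic) to get
$$
\psi\bigl(p(e')^2p\bigr)=\psi\bigl(\alpha_{-i\beta/2}(e')\,p\,\alpha_{-i\beta/2}(e')^*\bigr)\le\psi\bigl((e')^2\bigr)\le\|e'\|\,\psi(e')<\infty ,
$$
and since $\|(e')^2-p\|\le(\|e'\|+1)\|e'-p\|$ can be made $<\tfrac12$, conclude $p(e')^2p\ge\tfrac12 p$ and $\psi(p)\le 2\psi\bigl(p(e')^2p\bigr)<\infty$. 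This is precisely what the paper does with its elements $c_{k,n}$.
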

\begin{proof} Assume that $a \geq 0$, $\psi(a) < \infty$ and that
  $a^{\frac{1}{2}}$ is analytic for $\alpha$. Then Proposition 1.11 in
  \cite{KV} applies to conclude that
\begin{equation}\label{54}
 \psi\left(pap\right) =
 \psi\left(\alpha_{-\frac{i\beta}{2}}\left(a^{\frac{1}{2}}\right)
   p\alpha_{-\frac{i\beta}{2}}\left(a^{\frac{1}{2}}\right)^*\right)
 \leq
 \psi\left(\alpha_{-\frac{i\beta}{2}}\left(a^{\frac{1}{2}}\right)\alpha_{-\frac{i\beta}{2}}\left(a^{\frac{1}{2}}\right)^*\right) = \psi(a).
\end{equation} 
Let $\{b_k\}$ be a sequence of positive elements in $A$ such that
$\lim_{k \to \infty} b_k = p$ and $\psi(b_k) < \infty$ for all $k$. For each $n
\in \mathbb N$, set 
$$
c_{k,n} = \sqrt{\frac{n}{\pi}} \int_{\mathbb R} \alpha_t(b_k)
e^{-nt^2} \ dt  .
$$
Then $c_{k,n}$ is analytic for $\alpha$ and
$\psi\left(c_{k,n}^2\right) \leq
\left\|c_{k,n}\right\|\psi\left(c_{k,n}\right) \leq 
\left\|c_{k,n}\right\|\psi\left(b_k\right) < \infty$ for all $k,n$. It
follows therefore from (\ref{54}) that $\psi\left(pc_{k,n}^2p\right) \leq \psi\left(c_{k,n}^2\right)
< \infty$ for all $k,n$. Note that
$$
\lim_{k \to \infty} \lim_{n \to \infty} c_{k,n}^2 = \lim_{k \to
  \infty} b_k^2 = p^2 = p.
$$
It follows that there are $k,n$ such that $\left\|p -
  pc_{k,n}^2p\right\| \leq \frac{1}{2}$, and then spectral theory tells
us that $ pc_{k,n}^2p \geq \frac{1}{2} p$. Hence
$\psi(p) \leq 2 \psi\left(pc_{k,n}^2p\right) < \infty$.

\end{proof}

 Let $\mathcal G$ be a locally compact second
  countable Hausdorff
  \'etale groupoid and $c : \mathcal G \to \mathbb R$ a continuous
  homomorphism. Let $\mu$ be a regular Borel measure on $\mathcal
  G^{(0)}$ and $\beta \in \mathbb R$ a real number. We say
that $\mu$ is \emph{$(\mathcal G,c)$-conformal with exponent $\beta$},
as in \cite{Th3}, or that $\mu$ is \emph{quasi-invariant with
  Radon-Nikodym cocycle $e^{-\beta c}$}, as in \cite{N}, when 
\begin{equation}\label{conformal2}
\mu\left( s(W)\right) = \int_{r(W)} e^{\beta c\left( r_W^{-1}(x)\right)} \
  d\mu(x) 
\end{equation}
for every open bi-section $W \subseteq \mathcal G$, where $r^{-1}_W$
denotes the inverse $r : W \to r(W)$. For each $x\in \mathcal G^{(0)}$
we can consider the full group $C^*$-algebra $C^*\left(\mathcal
  G^x_x\right)$ of the discrete group $\mathcal G^x_x$, the isotropy
group at $x$. As in \cite{N} we denote for $g \in \mathcal
G^x_x$ by $u_g$ the characteristic function of the element $g$ when we
consider $C^*\left(\mathcal G^x_x\right)$ as a completion of
$C_c\left(\mathcal G^x_x\right)$. Thus
$u_g,  g \in \mathcal G^x_x$, are the canonical unitary generators of $C^*\left(\mathcal
  G^x_x\right)$.  Following \cite{N} we say that a
collection $\varphi_x, x \in \mathcal G^{(0)}$, of states on $C^*\left(\mathcal G^x_x\right)$ is a
\emph{$\mu$-measurable field} when the function
$$
\mathcal G^{(0)} \ni x \mapsto \sum_{g \in 
  \mathcal G^x_x} f(g) \varphi_x(u_g) 
$$
is $\mu$-measurable for all $f \in C_c(\mathcal G)$. We identify two
$\mu$-measurable fields $\left\{\varphi_x\right\}_{x \in \mathcal
  G^{(0)}}$ and $\left\{\varphi'_x\right\}_{x \in \mathcal
  G^{(0)}}$ when $\varphi_x = \varphi'_x$ for $\mu$-almost every $x$.

The following theorem is a version for weights of Theorem 1.3 in \cite{N}. Note
that it deals with the full groupoid $C^*$-algebra $C^*(\mathcal
G)$ which is an extension of the reduced groupoid $C^*_r(\mathcal
G)$. We refer to \cite{Re1} for the definition of the full groupoid
$C^*$-algebra. To understand the following theorem and its proof it
suffices to know that $C^*(\mathcal G)$, like $C^*_r(\mathcal G)$, is a completion of
$C_c(\mathcal G)$ and that a continuous homomorphism $c :\mathcal G \to
\mathbb R$ also defines a continuous one-parameter group $\sigma^c$ on
$C^*(\mathcal G)$ via the formula (\ref{44r}).

\begin{thm}\label{Nversion1} (Neshveyev's theorem for
  weights.) Let $\mathcal G$ be a locally compact second countable
  Hausdorff \'etale groupoid and let $c : \mathcal G \to \mathbb R$ be
  a continuous homomorphism. Assume that the unit space $\mathcal G^{(0)}$ of $\mathcal G$ is totally
  disconnected. 

There is a bijective correspondence
  between the $\beta$-KMS weights for $\sigma^c$ on $C^*(\mathcal G)$ and the pairs $\left(\mu,
  \left\{\varphi_x\right\}_{x \in \mathcal G^{(0)}}\right)$, where
$\mu$ is a regular Borel measure on $\mathcal G^{(0)}$ and
$\left\{\varphi_x\right\}_{x \in \mathcal G^{(0)}}$ is a
$\mu$-measurable field of states $\varphi_x$ on
$C^*(\mathcal G^x_x)$ such that
\begin{enumerate}
\item[a)] $\mu$ is quasi-invariant with Radon-Nikodym cocycle $e^{-\beta c}$,
\item[b)] $\varphi_x(u_g) = \varphi_{r(h)}\left(u_{hgh^{-1}}\right)$
    for $\mu$-almost every $x\in \mathcal G^{(0)}$ and all $g\in
    \mathcal G^x_x, \ h \in \mathcal G_x$, and
\item[c)] $\varphi_x(u_g)  = 0$ for $\mu$-almost every $x \in \mathcal
  G^{(0)}$ and all $g \in \mathcal G^x_x \backslash c^{-1}(0)$.
\end{enumerate}
The $\beta$-KMS weight $\phi$ corresponding to the pair $\left(\mu,
  \left\{\varphi_x\right\}_{x \in \mathcal G^{(0)}}\right)$ has the
properties that $C_c\left(\mathcal G\right) \subseteq \mathcal
M_{\phi}$ and
\begin{equation}\label{60}
\phi(f) = \int_{\mathcal G^{(0)}}  \sum_{g \in \mathcal G^x_x}
f(g)\varphi_x(u_g) \ d\mu(x) 
\end{equation}
when $f \in C_c(\mathcal G)$.
\end{thm}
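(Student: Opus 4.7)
The plan is to bootstrap Neshveyev's original theorem for KMS states (Theorem~1.3 in \cite{N}) to the weight setting by using the total disconnectedness of $\mathcal G^{(0)}$ to reduce to finite weights on corners. My starting observation is that, since $c$ is a homomorphism and every unit $x \in \mathcal G^{(0)}$ satisfies $xx = x$, we have $c(x) = 2c(x)$, so $c$ vanishes on $\mathcal G^{(0)}$. Consequently every $f \in C_0(\mathcal G^{(0)})$ lies in the fixed point algebra of $\sigma^c$, and in particular, for any compact open $U \subseteq \mathcal G^{(0)}$, the characteristic function $1_U$ is a $\sigma^c$-fixed projection in $C^*(\mathcal G)$. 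Second countability and total disconnectedness give an increasing countable exhaustion of $\mathcal G^{(0)}$ by compact open sets $U_n$, and for each such $U$ the corner $1_U C^*(\mathcal G) 1_U$ is naturally identified with $C^*(\mathcal G_U)$, where $\mathcal G_U = r^{-1}(U) \cap s^{-1}(U)$ is the reduction of $\mathcal G$ to $U$.

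For the direction $(\mu, \{\varphi_x\}) \Rightarrow \phi$, I would fix such an exhaustion with $\mu(U_n) < \infty$ (available since compact sets carry finite measure) and apply Neshveyev's theorem for states on each $\mathcal G_{U_n}$ to the normalised pair $\bigl(\mu(U_n)^{-1}\mu|_{U_n}, \{\varphi_x\}_{x \in U_n}\bigr)$, obtaining a $\beta$-KMS state $\psi_n$ on $C^*(\mathcal G_{U_n})$. The scaled weights $\phi_n = \mu(U_n)\psi_n$ on the corners are then compatible under inclusion (by uniqueness in Neshveyev's theorem) and reproduce formula (\ref{60}) on $C_c(\mathcal G_{U_n})$. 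I would then set $\phi(a) = \sup_n \phi_n(1_{U_n} a 1_{U_n})$ for $a \in C^*(\mathcal G)_+$ and verify that $\phi$ is $\sigma^c$-invariant (since each $\phi_n$ is and $1_{U_n}$ is $\sigma^c$-fixed), lower semicontinuous (as a supremum of lower semicontinuous weights), and densely defined (since any $f \in C_c(\mathcal G)$ has $s(\supp f) \cup r(\supp f)$ compact, hence contained in some $U_N$, giving $f \in C_c(\mathcal G_{U_N}) \subseteq \mathcal M_{\phi}$).

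For the converse direction, given a $\beta$-KMS weight $\phi$ for $\sigma^c$, I would apply Lemma~\ref{53} to each $\sigma^c$-fixed projection $1_U$ to conclude $\phi(1_U) < \infty$ for every compact open $U \subseteq \mathcal G^{(0)}$. The restriction of $\phi$ to $C_c(\mathcal G^{(0)})$ is then a positive Radon form, yielding a regular Borel measure $\mu$ on $\mathcal G^{(0)}$ with $\mu(U) = \phi(1_U)$ via Riesz representation. On each corner $1_{U_n} C^*(\mathcal G) 1_{U_n} \cong C^*(\mathcal G_{U_n})$ with $0 < \mu(U_n) < \infty$, the normalised restriction $\mu(U_n)^{-1} \phi|_{C^*(\mathcal G_{U_n})}$ is a $\beta$-KMS state, to which Neshveyev's theorem associates a $\mu_{U_n}$-measurable field $\{\varphi_x^{(n)}\}_{x \in U_n}$ with $\mu_{U_n} = \mu(U_n)^{-1}\mu|_{U_n}$. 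Uniqueness in Neshveyev's theorem forces compatibility $\mu$-a.e.\ on overlaps, so the local fields assemble into a single $\mu$-measurable global field $\{\varphi_x\}_{x \in \mathcal G^{(0)}}$. Properties a), b), c), being $\mu$-a.e.\ statements, transfer immediately from the local to the global picture, and formula (\ref{60}) for $\phi$ on $f \in C_c(\mathcal G)$ is verified by choosing $N$ large enough that $\supp f \subseteq \mathcal G_{U_N}$ and applying the local statement there.

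The main obstacle will be verifying the KMS condition for the reconstructed $\phi$ in the pair-to-weight direction. Lower semicontinuity and $\sigma^c$-invariance are routine, but checking identity (\ref{not1}) for an arbitrary $a \in D(\alpha_{-i\beta/2})$ requires controlling the cut-downs $1_{U_n} a 1_{U_n}$ for both $a^*a$ and $\alpha_{-i\beta/2}(a)\alpha_{-i\beta/2}(a)^*$ simultaneously; since $1_{U_n}$ is $\sigma^c$-fixed the cut-down preserves analyticity, so the real task is passing to the limit using lower semicontinuity of $\phi$ and the increasing character of $\{1_{U_n}\}$, probably via an approximate-unit style monotone convergence argument. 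A secondary technical point in the converse direction is verifying $\mu$-measurability of the assembled field against an $f \in C_c(\mathcal G)$ whose support meets several $\mathcal G_{U_n}$, which will require a standard partition-of-unity reduction to the local measurability supplied by Neshveyev's theorem.
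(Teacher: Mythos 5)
Your proposal is correct and follows essentially the same route as the paper: both exploit that $c$ vanishes on units to get $\sigma^c$-fixed compact open projections exhausting $\mathcal G^{(0)}$, use Lemma~\ref{53} and the Riesz theorem to produce $\mu$, apply Neshveyev's state theorem to the normalised restrictions on the corners $p_nC^*(\mathcal G)p_n \cong C^*(\mathcal G|_{U_n})$, patch the local fields together via the uniqueness clause, and verify the KMS identity (\ref{not1}) for the reconstructed weight by cutting down with the fixed projections and passing to the limit by lower semicontinuity. The only cosmetic difference is that the paper writes the reconstructed weight directly as $\phi(a)=\int_{\mathcal G^{(0)}}\psi_x(a)\,d\mu(x)$ using Neshveyev's pointwise states $\psi_x$ on $C^*(\mathcal G)$ (which makes lower semicontinuity, additivity of the supremum, and formula (\ref{60}) immediate via Fatou), whereas your $\sup_n\phi_n(1_{U_n}a1_{U_n})$ requires the small extra check that the cut-downs increase in $n$, which follows from the corner KMS property exactly as in (\ref{54}).
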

\begin{proof} Let $\phi$ be a $\beta$-KMS weight for $\sigma^c$. Since
  $\mathcal G^{(0)}$ is totally disconnected by assumption there is a
  sequence $p_1 \leq p_2 \leq p_3 \leq \cdots$ of projections in
  $C_c(\mathcal G^{(0)})$ with the property that $\{p_n\}$ is an
  approximate unit for $C^*(\mathcal G)$. It follows from Lemma
  \ref{53} that $\phi(p_n) < \infty$ for all $n$. Since $\phi \neq 0$
  we can assume, without loss of generality, that $\phi(p_n) > 0$ for
  all $n$. Since $\phi(f) < \infty$ for every non-negative function in
  $C_c\left(\mathcal G^{(0)}\right)$ it follows that $C_c(\mathcal
  G) \subseteq \mathcal M_{\phi}$ and from the Riesz
  representation theorem that there is a unique regular Borel measure
  $\mu$ on $\mathcal G^{(0)}$ such that
$$
\phi(f) = \int_{\mathcal G^{(0)}} f(x) \ d\mu(x)
$$
for all $f  \in C_c(\mathcal G^{(0)})$. Let $U_n$ be the compact and
open support of $p_n$, and set 
$$
\mathcal G^n = \mathcal G|_{U_n} = \left\{ \xi \in \mathcal G: \
  r(\xi),s(\xi) \in U_n\right\}
$$ 
and $c_n
= c|_{\mathcal G^{n}}$. Note that $\phi(p_n)^{-1}\phi$ restricts to a
$\beta$-KMS state on $p_nC^*(\mathcal G)p_n = C^*\left(\mathcal
  G^n\right)$. It follows from Neshveyev's
theorem \cite{N} that there is a probability measure $\mu_{n}$ on $U_{n}$, and a $\mu_{n}$-measurable field
$\left\{\varphi^n_x\right\}_{ x \in U_n}$ of states such that
\begin{enumerate}
\item[an)] $\mu_{n}$ is quasi-invariant on $\mathcal{G}^{n}$ with cocycle $e^{-\beta c_n}$.
\item[bn)] $\varphi^n_x(u_g) = \varphi^n_{r(h)}\left(u_{hgh^{-1}}\right)$
    for $\mu_{n}$-almost every $x\in U_n$ and all $g\in
    \mathcal G^x_x, \ h \in (\mathcal G^n)_x$,
\item[cn)] $\varphi^n_x(u_g)  = 0$ for $\mu_{n}$-almost every $x \in U_n$ and all $g \in \mathcal G^x_x \backslash c_{n}^{-1}(0)$,
\end{enumerate}
and
$$
\phi(p_n)^{-1} \phi(f) = \int_{U_n}  \sum_{g \in \mathcal G^x_x}
f(g)\varphi^n_x(u_g) \ d\mu_{n}(x) 
$$
when $f \in C_c(\mathcal G^n)$. For every $f \in C_{c}(U_{n})$ we get that:
\begin{align*}
&\phi(p_n)^{-1}\int_{U_{n}} f(x) \ d\mu(x)=\phi(p_n)^{-1}\int_{\mathcal G^{(0)}} f(x) \ d\mu(x)=\phi(p_n)^{-1} \phi(f) \\
&=\int_{U_n}  \sum_{g \in \mathcal G^x_x}
f(g)\varphi^n_x(u_g) \ d\mu_{n}(x) =\int_{U_n}  f(x) \ d\mu_{n}(x)
\end{align*}
so $\mu|_{U_{n}}=\phi(p_{n}) \mu_{n}$. Notice that since $\phi(p_{n})>0$, being a $\mu$ nullset in $U_{n}$ is the same as being a $\mu_{n}$ nullset. For a Borel set $V \subseteq U_{n} \subseteq U_{n+1}$ we have that:
$$
\phi(p_{n+1})\mu_{n+1}(V) = \mu(V)= \phi(p_{n})\mu_{n}(V)
$$
So $\mu_{n} = \phi(p_{n+1})/\phi(p_{n}) \mu_{n+1}|_{U_{n}}$. For every $f \in C_{c}(\mathcal{G}^{n})$ we get that:
\begin{align*}
&\int_{U_{n}}\sum_{g \in \mathcal G^x_x} f(g)\varphi^n_x(u_g) \ d\mu_{n}(x) = \phi(p_n)^{-1} \phi(f)=\frac{\phi(p_{n+1})}{\phi(p_n)}\phi(p_{n+1})^{-1} \phi(f) \\
&=\frac{\phi(p_{n+1})}{\phi(p_n)} \int_{U_{n+1}}\sum_{g \in \mathcal G^x_x} f(g)\varphi^{n+1}_x(u_g) \ d\mu_{n+1}(x) = \int_{U_{n}}\sum_{g \in \mathcal G^x_x} f(g)\varphi^{n+1}_x(u_g) \ d\mu_{n}(x)
\end{align*}
Since $\mu_{n}$ by choice satisfy an), and since it's easily seen that
$\{\varphi^{n+1}_{x}\}_{x \in U_{n}}$ satisfy bn) and cn), the
uniqueness statement in Neshveyev's theorem gives that
$\varphi^{n}_{x}=\varphi^{n+1}_{x}$ for a.e. $x \in U_{n}$. Hence for
a.e. $x \in \mathcal{G}^{(0)}$ we can define a state on
$C^*\left(\mathcal G^x_x\right)$ by:
$$
\varphi_{x}(d) = \lim_{n \to\infty} \varphi^n_x(d).
$$
For every $f \in C_{c}(\mathcal{G})$ there is a $ N \in \mathbb{N}$ such that $f \in C_{c}(\mathcal{G}^{N})$, and hence:
$$
\phi(f)=\phi(P_{N})\int_{U_{N}}\sum_{g \in \mathcal G^x_x}
f(g)\varphi^N_x(u_g) \ d\mu_{N}(x) =\int_{\mathcal{G}^{(0)}}\sum_{g
  \in \mathcal G^x_x} f(g)\varphi_x(u_g) \ d\mu(x) .
$$
The properties a)-c) follow from an)-cn), and measurability of $x \mapsto \sum_{g \in 
  \mathcal G^x_x} f(g) \varphi_x(u_g) $ follows from measurability of $x \mapsto \sum_{g \in 
  \mathcal G^x_x} f(g) \varphi^{n}_x(u_g) $.

For the converse, assume we are given a pair $\left(\mu,
  \left\{\varphi_x\right\}_{x \in \mathcal G^{(0)}}\right)$ for which
a),b) and c) hold. As shown
by Neshveyev in the proof of Theorem 1.1 in \cite{N} every $x$ gives rise to a state $\psi_x$ on $C^*(\mathcal G)$
such that
$$
\psi_x (f) = \sum_{g \in \mathcal G_x^x} f(g)\varphi_x(u_g)
$$
when $f \in C_c(\mathcal G)$. Note that
$x \to \sum_{g \in \mathcal G_x^x} f(g)\varphi_x(u_{g})$ is
$\mu$-measurable by assumption, and then $x \to \psi_x(a)$ is also for each $a
\in C^*(\mathcal G)$. For $a \geq 0$ we can therefore define
$$
\phi(a) = \int_{\mathcal G^{(0)}} \psi_x(a) \
d\mu(x) .
$$
$\phi$ is a lower semi-continuous weight by Fatous lemma and by regularity
$$
\phi(p_nap_n) =  \int_{U_n} \psi_x(a) \
d\mu(x)
\leq \|a\|  \mu(U_{n}) < \infty
$$
for all $n$, so it is also densely defined. Note that $C_c(\mathcal G)
\subseteq \mathcal M_{\phi}$ and that (\ref{60}) holds by
construction. Since the pair
$\left(\phi(p_{n})^{-1}\mu,   \left\{\varphi_x\right\}_{x \in
    U_{n}}\right)$ represents $\phi(p_{n})^{-1} \phi$ in the sense of
Theorem 1.1 in \cite{N} it follows from Theorem 1.3 in \cite{N} that $\phi$ is a bounded $\beta$-KMS weight on
$p_nC^*(\mathcal G)p_n$. Since
$$
\psi_x(p_n a p_n) = \begin{cases}  \psi_x(a), & \ x \in U_n \\ 0, & \ x \notin U_n
\end{cases},
$$
we find that $\lim_{n \to \infty} \phi(p_nap_n) = \lim_{n \to \infty}
\int_{U_n}  \psi_x(a) \
d\mu(x) = \phi(a)$ for all $a \geq 0$ in $C^*(\mathcal G)$. Now note
that for every $a$ in the domain of $\sigma^c_{\frac{-i\beta}{2}}$,
$$
\phi(p_na^*ap_n) = \phi\left(
  \sigma^c_{\frac{-i\beta}{2}}(ap_n)\sigma^c_{\frac{-i\beta}{2}}(ap_n)^*\right) =  \phi\left(
  \sigma^c_{\frac{-i\beta}{2}}(a)p_n\sigma^c_{\frac{-i\beta}{2}}(a)^*\right)
$$
since $\phi$ is a bounded $\beta$-KMS weight on $p_nC^*(\mathcal
G)p_n$. Since
$$
\lim_{n \to\infty}  \phi\left(
  \sigma^c_{\frac{-i\beta}{2}}(a)p_n\sigma^c_{\frac{-i\beta}{2}}(a)^*\right) =  \phi\left(
  \sigma^c_{\frac{-i\beta}{2}}(a)\sigma^c_{\frac{-i\beta}{2}}(a)^*\right)
$$
by the lower semi-continuity of $\phi$, we conclude that $\phi(a^*a) = \phi\left(
  \sigma^c_{\frac{-i\beta}{2}}(a)\sigma^c_{\frac{-i\beta}{2}}(a)^*\right)$, showing that $\phi$ is indeed a $\beta$-KMS weight for $\sigma^c$.

If $\left(\mu,
  \left\{\varphi_x\right\}_{x \in \mathcal G^{(0)}}\right)$ and  $\left(\mu',
  \left\{\varphi'_x\right\}_{x \in \mathcal G^{(0)}}\right)$ represent
the same $\beta$-KMS weight it follows from the uniqueness part of the
Riesz representation theorem that $\mu = \mu'$. By using (\ref{60}) we
find that 
\begin{equation}\label{61}
\int_{\mathcal G^{(0)}}  k(x)\sum_{g \in \mathcal G^x_x}
f(g)\varphi_x(u_g) \ d\mu(x) = \int_{\mathcal G^{(0)}}  k(x)\sum_{g \in \mathcal G^x_x}
f(g)\varphi'_x(u_g) \ d\mu(x) 
\end{equation}
when $f \in C_c(\mathcal G)$ and $k \in C_c\left(\mathcal
  G^{(0)}\right)$. It follows from this that
$$
\sum_{g \in \mathcal G^x_x}
f(g)\varphi_x(u_g) = \sum_{g \in \mathcal G^x_x}
f(g)\varphi'_x(u_g) 
$$
for $\mu$-almost all $x \in \mathcal G^{(0)}$ and all $f \in
C_c\left(\mathcal G\right)$. Thanks to the separability of
$C^*(\mathcal G)$ we conclude that $\varphi_x = \varphi'_x$ for
$\mu$-almost all $x$.

\end{proof}

\begin{cor}\label{65} Let $\mathcal G$ be a locally compact second countable
  Hausdorff \'etale groupoid and let $c : \mathcal G \to \mathbb R$ be
  a continuous homomorphism. Assume that the unit space $\mathcal G^{(0)}$ of $\mathcal G$ is totally
  disconnected and that the isotropy groups $\mathcal G^x_x, x \in
  \mathcal G^{(0)}$, are all amenable.

There is a bijective correspondence
  between the $\beta$-KMS weights for $\sigma^c$ on
  $C^*_r(\mathcal G)$ and the pairs $\left(\mu,
  \left\{\varphi_x\right\}_{x \in \mathcal G^{(0)}}\right)$, where
$\mu$ is a regular Borel measure on $\mathcal G^{(0)}$ and
$\left\{\varphi_x\right\}_{x \in \mathcal G^{(0)}}$ is a
$\mu$-measurable field of states $\varphi_x$ on
$C^*_r(\mathcal G^x_x)$ such that
\begin{enumerate}
\item[a)] $\mu$ is quasi-invariant with cocycle $e^{-\beta c}$,
\item[b)] $\varphi_x(u_g) = \varphi_{r(h)}\left(u_{hgh^{-1}}\right)$
    for $\mu$-almost every $x\in \mathcal G^{(0)}$ and all $g\in
    \mathcal G^x_x, \ h \in \mathcal G_x$, and
\item[c)] $\varphi_x(u_g)  = 0$ for $\mu$-almost every $x \in \mathcal
  G^{(0)}$ and all $g \in \mathcal G^x_x \backslash c^{-1}(0)$.
\end{enumerate}
The $\beta$-KMS weight $\phi$ corresponding to the pair $\left(\mu,
  \left\{\varphi_x\right\}_{x \in \mathcal G^{(0)}}\right)$ has the
properties that $C_c\left(\mathcal G\right) \subseteq \mathcal
M_{\phi}$ and
\begin{equation}\label{66}
\phi(f) = \int_{\mathcal G^{(0)}}  \sum_{g \in \mathcal G^x_x}
f(g)\varphi_x(u_g) \ d\mu(x) 
\end{equation}
when $f \in C_c(\mathcal G)$.
\end{cor}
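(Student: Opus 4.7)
The strategy is to repeat the proof of Theorem \ref{Nversion1} essentially verbatim, replacing $C^*(\mathcal G)$ by $C^*_r(\mathcal G)$ throughout. The assumption that each isotropy group $\mathcal G^x_x$ is amenable ensures that $C^*\left(\mathcal G^x_x\right) = C^*_r\left(\mathcal G^x_x\right)$, so there is no ambiguity in what it means for $\varphi_x$ to be a state on the isotropy group $C^*$-algebra. Moreover, for each compact open subset $U_n \subseteq \mathcal G^{(0)}$, the restriction $\mathcal G^n = \mathcal G|_{U_n}$ has the same (amenable) isotropy groups, and $p_nC^*_r(\mathcal G)p_n = C^*_r(\mathcal G^n)$.

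For the forward direction, given a $\beta$-KMS weight $\phi$ for $\sigma^c$ on $C^*_r(\mathcal G)$, I would choose an increasing sequence $p_1 \leq p_2 \leq \cdots$ of projections in $C_c(\mathcal G^{(0)})$ forming an approximate unit (available since $\mathcal G^{(0)}$ is totally disconnected), invoke Lemma \ref{53} to get $\phi(p_n) < \infty$, and restrict $\phi(p_n)^{-1}\phi$ to the unital corner $C^*_r(\mathcal G^n)$ to obtain a bounded $\beta$-KMS state. Apply the reduced version of Neshveyev's original theorem from \cite{N} (which handles the reduced $C^*$-algebra precisely under the amenable isotropy hypothesis) to extract pairs $(\mu_n, \{\varphi^n_x\}_{x\in U_n})$ satisfying a)--c) on $\mathcal G^n$. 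The compatibility argument between levels $n$ and $n+1$, showing that $\mu|_{U_n} = \phi(p_n)\mu_n$, that $\mu_n = \phi(p_{n+1})/\phi(p_n) \, \mu_{n+1}|_{U_n}$, and that $\varphi^n_x = \varphi^{n+1}_x$ for $\mu$-a.e.\ $x \in U_n$ by the uniqueness clause, is identical to what was done in the proof of Theorem \ref{Nversion1}. Gluing yields the global pair $(\mu, \{\varphi_x\})$ satisfying a)--c) and formula (\ref{66}).

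For the converse direction, given a pair $(\mu, \{\varphi_x\})$ satisfying a)--c), I would apply the reduced version of Neshveyev's theorem to $(\phi(p_n)^{-1}\mu, \{\varphi_x\}_{x\in U_n})$ on each $\mathcal G^n$ to produce a bounded $\beta$-KMS weight on $C^*_r(\mathcal G^n) = p_nC^*_r(\mathcal G)p_n$, and then define $\phi$ on $C^*_r(\mathcal G)_+$ by $\phi(a) = \int_{\mathcal G^{(0)}} \psi_x(a) \, d\mu(x)$, with $\psi_x$ the state on $C^*_r(\mathcal G)$ associated to $\varphi_x$. Lower semi-continuity follows from Fatou's lemma, density of the finite part from the bound $\phi(p_napa_n) \leq \|a\|\mu(U_n)$, and the KMS property (\ref{not1}) is verified by reducing to the bounded statement on each corner $p_nC^*_r(\mathcal G)p_n$ and passing to the limit $n \to \infty$, using lower semi-continuity exactly as in the proof of Theorem \ref{Nversion1}. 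Finally, uniqueness of the pair follows from the Riesz representation theorem together with the separability of $C^*_r(\mathcal G)$, again by the same argument given there.

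The main point, which is not really an obstacle but the heart of the matter, is the amenability hypothesis: without it, neither direction of the correspondence is directly accessible from the full-$C^*$-algebra version (Theorem \ref{Nversion1}), since states on $C^*_r(\mathcal G^x_x)$ and on $C^*(\mathcal G^x_x)$ differ, and the natural states $\psi_x$ constructed from $\varphi_x$ need not descend to the reduced algebra in general. Amenability of the isotropy resolves both issues simultaneously, reducing the proof to a routine transcription of the argument for Theorem \ref{Nversion1}.
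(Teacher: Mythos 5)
Your overall architecture---decompose along the corner projections $p_n$, apply a corner-level classification, glue---is reasonable, but it hinges on a reference that is not available in the form you invoke it. The results of \cite{N} cited in this paper (Theorems 1.1 and 1.3) classify states on the \emph{full} groupoid $C^*$-algebra; there is no off-the-shelf ``reduced version of Neshveyev's theorem'' that you can simply apply to $p_nC^*_r(\mathcal G)p_n$. The genuinely non-trivial point in passing to $C^*_r(\mathcal G)$ is not the identification $C^*(\mathcal G^x_x)=C^*_r(\mathcal G^x_x)$, which amenability gives for free, but the fact that the state $\psi_x$ on $C^*(\mathcal G)$ built from $\varphi_x$---and hence the weight $\phi$ defined by integrating the $\psi_x$---factors through the canonical surjection $\pi : C^*(\mathcal G)\to C^*_r(\mathcal G)$. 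Amenability of the isotropy groups does imply this, but only via an actual argument (weak containment of the representation induced from the GNS representation of $\varphi_x$ in the regular representation), since the groupoid itself is not assumed amenable; recall that the paper explicitly flags as open whether the corollary survives without this hypothesis. Your proposal asserts that ``amenability resolves both issues simultaneously'' without supplying or correctly locating this argument, so the converse direction of the correspondence is left unproved.

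The paper's own proof is both shorter and differently organized: it observes that it suffices to show that every $\beta$-KMS weight $\phi$ on $C^*(\mathcal G)$ factors through $C^*_r(\mathcal G)$, obtains this from Lemma 2.1 of \cite{Th2} (the descent result for the bounded KMS weights on the unital corners $p_nC^*(\mathcal G)p_n$ under amenable isotropy) together with an increasing limit over $n$, and then transports the correspondence of Theorem \ref{Nversion1} along $\pi$. Once that descent lemma is in hand, your verbatim transcription of the gluing argument becomes redundant; without it, the transcription does not close the gap. So the fix is to replace the appeal to a ``reduced Neshveyev theorem'' by the factorization statement (proved or properly cited), at which point the corollary follows directly from Theorem \ref{Nversion1}.
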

\begin{proof}  It suffices to show that the assumption on the isotropy
  groups implies that every $\beta$-KMS weight $\phi$ on $C^*(\mathcal G)$
    factorises through $C^*_r(\mathcal G)$. To this end note that it
    follows from
    Lemma 2.1 in \cite{Th2} that for each $n \in \mathbb N$ there is a
    bounded $\beta$-KMS weight
    $\tilde{\phi}_n$ on $p_n C^*_r(\mathcal G)p_n$ such that
    $\tilde{\phi}_n(p_n\pi(a)p_n) = \phi(p_nap_n)$ for all $a \in
    C^*(\mathcal G)$ where $\pi : C^*(\mathcal G) \to C^*_r(\mathcal
    G)$ is the canonical surjection. Then $\tilde{\phi}_n(p_nbp_n) \leq
    \tilde{\phi}_{n+1}(p_{n+1}bp_{n+1})$ for all $b \geq 0$ in
    $C^*_r(\mathcal G)$ and we can define a lower semi-continuous
    weight $\tilde{\phi}$ on $C^*_r(\mathcal G)$ such that
    $\tilde{\phi}(b) = \lim_{n \to\infty}
    \tilde{\phi}_n(p_nbp_n)$. It follows that $\tilde{\phi} \circ \pi =
    \phi$.

\end{proof}

It is an interesting problem if Corollary \ref{65} remains true
without the amenability assumption on the isotropy groups. For the
proof of our main result the following suffices.

\begin{cor}\label{75}  Let $\mathcal G$ be a locally compact second countable
  Hausdorff \'etale groupoid and let $c : \mathcal G \to \mathbb R$ be
  a continuous homomorphism. Assume that the unit space $\mathcal G^{(0)}$ of $\mathcal G$ is totally
  disconnected. If there is a $\beta$-KMS weight for $\sigma^c$ on $C^*_r(\mathcal G)$
there is also one which is diagonal.
\end{cor}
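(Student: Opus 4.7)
Given a $\beta$-KMS weight $\phi$ on $C^*_r(\mathcal G)$, my plan is to produce the desired diagonal weight in two moves: first extract a quasi-invariant measure from $\phi$ via Theorem \ref{Nversion1}, then feed that measure back into Theorem \ref{Nversion1} together with the \emph{trivial} isotropy field, and finally descend the resulting weight through the canonical surjection $\pi \colon C^*(\mathcal G) \to C^*_r(\mathcal G)$.

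In detail, set $\tilde\phi = \phi \circ \pi$; this is a $\beta$-KMS weight on $C^*(\mathcal G)$ since $\pi$ intertwines the two versions of $\sigma^c$, so Theorem \ref{Nversion1} produces a regular Borel measure $\mu$ on $\mathcal G^{(0)}$ satisfying condition a) (the accompanying isotropy field plays no further role). Now replace that field by the canonical tracial states $\tau_x$ on $C^*(\mathcal G_x^x)$, given by $\tau_x(u_g) = \delta_{g,x}$. The pair $(\mu, \{\tau_x\})$ still satisfies a), b), c): a) is about $\mu$ only; b) reduces to the fact that $hgh^{-1}$ is the unit at $r(h)$ precisely when $g$ is the unit at $x$; c) is vacuous since $c$ vanishes on units. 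Measurability of $\{\tau_x\}$ is clear since $\sum_g f(g)\tau_x(u_g) = f(x)$ is continuous in $x$. Applying Theorem \ref{Nversion1} in reverse, I obtain a $\beta$-KMS weight $\phi''$ on $C^*(\mathcal G)$ with $\phi''(f) = \int_{\mathcal G^{(0)}} f(x)\,d\mu(x)$ on $C_c(\mathcal G)$, i.e. the diagonal prescription (\ref{f1}) at the level of the dense $*$-algebra.

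Next I show that $\phi''$ factors through $\pi$. The state $\psi_x$ that Neshveyev's construction attaches to $\tau_x$ is nothing but the vector state $a \mapsto \langle \pi_x(a)\delta_x, \delta_x\rangle$: both are states on $C^*(\mathcal G)$ and they coincide on $C_c(\mathcal G)$, where each is $f \mapsto f(x)$. Since the representation $\pi_x$ factors through $\pi$ by the very definition of $C^*_r(\mathcal G)$, so does every $\psi_x$ and hence so does $\phi'' = \int_{\mathcal G^{(0)}} \psi_x\,d\mu$. The induced weight on $C^*_r(\mathcal G)$ agrees with the diagonal weight $\varphi_\mu$ of (\ref{f1}) on $C_c(\mathcal G)$, and is therefore that weight.

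The final and most delicate step is to verify that $\varphi_\mu$ is itself a $\beta$-KMS weight on $C^*_r(\mathcal G)$. Properness is already noted before Theorem \ref{Nversion1}. Invariance follows from $P\circ\sigma^c_t = P$, which reduces to $c$ vanishing on $\mathcal G^{(0)}$. For the KMS identity (\ref{not1}) I would mimic the bounded-corner argument from the second half of the proof of Theorem \ref{Nversion1}: the restriction of $\phi''$ to each corner $p_n C^*(\mathcal G) p_n$ is a bounded $\beta$-KMS functional, and because $\phi''$ factors through $\pi$ it descends to a bounded $\beta$-KMS functional on $p_nC^*_r(\mathcal G)p_n$; letting $n \to \infty$ and invoking lower semi-continuity exactly as in that proof yields (\ref{not1}) for $\varphi_\mu$. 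This is the step where the amenability hypothesis of Corollary \ref{65} would otherwise intervene; here it is circumvented because the chosen $\tau_x$ have GNS representation that factors through $C^*_r(\mathcal G^x_x)$ tautologically, which is all we need for the descent of a \emph{single} weight.
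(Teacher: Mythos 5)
Your argument is correct, and its first half is exactly the paper's: compose with the canonical surjection $\pi$ to get a $\beta$-KMS weight on $C^*(\mathcal G)$ and extract from Theorem \ref{Nversion1} the quasi-invariant measure $\mu$ with cocycle $e^{-\beta c}$. Where you diverge is in how you convert $\mu$ back into a diagonal KMS weight on $C^*_r(\mathcal G)$: the paper simply cites Proposition 2.1 of \cite{Th3} for the fact that such a measure defines a $\beta$-KMS weight via (\ref{f1}), whereas you rederive this internally by running Theorem \ref{Nversion1} in reverse with the trivial field $\tau_x(u_g)=\delta_{g,x}$, identifying the resulting Neshveyev states $\psi_x$ with the vector states $a\mapsto\langle\pi_x(a)\delta_x,\delta_x\rangle$ of the regular representations (so that the weight factors through $\pi$ and coincides with $\varphi_\mu$, since $P(b)(x)=\langle\pi_x(b)\delta_x,\delta_x\rangle$), and then pushing the corner-wise bounded KMS condition from $p_nC^*(\mathcal G)p_n$ down to $p_nC^*_r(\mathcal G)p_n$ before passing to the limit. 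All the verifications you make (conditions a)--c) for the trivial field, measurability, the descent through $\pi$, and the limit argument for (\ref{not1})) are sound. What your route buys is self-containedness --- no appeal to \cite{Th3} --- together with a clean explanation of why the amenability hypothesis of Corollary \ref{65} is not needed here: only this one specific weight must descend to the reduced algebra, and it does so tautologically because the trivial field is implemented by the regular representations. The cost is that you must re-verify the KMS property on $C^*_r(\mathcal G)$ by hand, which the external citation would have delivered directly.
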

\begin{proof} Let $\phi$ be a $\beta$-KMS weight for $\sigma^c$ on
  $C^*_r(\mathcal G)$ and let $\pi : C^*(\mathcal G) \to
  C^*_r(\mathcal G)$ be the canonical surjection. Then $\phi \circ
  \pi$ is a $\beta$-KMS weight for $\sigma^c$ on
  $C^*(\mathcal G)$ and we can consider the corresponding regular
  Borel measure $\mu$. Since $\mu$ is quasi-invariant with cocycle
  $e^{-\beta c}$ it follows from Proposition 2.1 in \cite{Th3} that
  $\mu$ defines a diagonal $\beta$-KMS weight by the formula (\ref{f1}).
\end{proof}

\section{Conditions on a KMS weight that imply diagonality of the action}

\subsection{When KMS weights factor through
  the conditional expectation onto an abelian subalgebra}

A weight $\omega$ is \emph{faithful} when $a \geq 0,\ \omega(a) = 0
\Rightarrow a =0$.

\begin{prop}\label{4} Let $A$ be a $C^*$-algebra and $\gamma$ a
  continuous one-parameter group of automorphisms on $A$. Let $D \subseteq A$ be an
  abelian $C^*$-subalgebra and $P : A \to D$ a conditional
  expectation.

Assume that $\omega$ is a faithful $\beta$-KMS weight for $\gamma$, $\beta \neq
0$, such that $\omega \circ P = \omega$. It follows that $\gamma_t(d)
= d$ for all $t \in \mathbb R$ and all $d \in D$.
\end{prop}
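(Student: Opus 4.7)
The plan is to show that $\gamma_t(d) = d$ on a norm-dense subset of the self-adjoint part of $D$, then extend by linearity and norm-continuity. First I would set up the density: for $d \in D_+$, approximate it by positive elements $a_n \in \mathcal M_\omega^+$ (possible since $\omega$ is densely defined), and note that $P(a_n) \in D_+$, $P(a_n) \to P(d) = d$, and $\omega(P(a_n)) = \omega(a_n) < \infty$ by the hypothesis $\omega \circ P = \omega$. Hence bounded self-adjoint $d \in D$ with $\omega(d) < \infty$---which automatically satisfy $\omega(d^2) \leq \|d\|\,\omega(d) < \infty$---are norm-dense in $D_{sa}$.

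For such $d$, both $d$ and $\gamma_t(d)$ lie in $\mathcal N_\omega \cap \mathcal N_\omega^*$ by $\gamma$-invariance of $\omega$, so the KMS condition applied with $a = b = d$ yields a bounded continuous function $F$ on the closed strip $D_\beta$, holomorphic on its interior, with $F(t) = \omega(d\gamma_t(d))$ and $F(t+i\beta) = \omega(\gamma_t(d)d)$. The crucial step is showing that these boundary values coincide. Since $P$ is automatically a $D$-bimodule map by Tomiyama's theorem, and $D$ is abelian,
$$
\omega(d\gamma_t(d)) = \omega\bigl(P(d\gamma_t(d))\bigr) = \omega\bigl(dP(\gamma_t(d))\bigr) = \omega\bigl(P(\gamma_t(d))d\bigr) = \omega(\gamma_t(d)d).
$$
Hence $F(t) = F(t+i\beta)$ for all $t \in \mathbb R$. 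Extending $F$ periodically across the horizontal lines $\operatorname{Im}(z) = n\beta$ (holomorphy at these lines follows from Morera's theorem) gives a bounded entire function, which by Liouville is constant. Thus $\omega(d\gamma_t(d)) = F(0) = \omega(d^2)$, and
$$
\omega\bigl((d-\gamma_t(d))^2\bigr) = 2\omega(d^2) - 2\omega(d\gamma_t(d)) = 0.
$$
Faithfulness of $\omega$ together with self-adjointness of $d-\gamma_t(d)$ force $d = \gamma_t(d)$. The norm-density from the first paragraph and norm-continuity of $\gamma_t$ then promote this to all of $D_{sa}$, hence to $D$ by linearity.

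The main point needing care, I expect, is the clean identification of a suitable dense test-set, since we are working with a weight rather than a state, and one must be sure the selected approximants sit simultaneously in $D$, in $\mathcal N_\omega \cap \mathcal N_\omega^*$, and are approximable from inside $D$; the strip-and-Liouville argument and the final extension step are both routine once the boundary values of $F$ are seen to agree.
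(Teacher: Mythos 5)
Your proof is correct, but it takes a genuinely different route from the paper's. Both arguments pivot on the same key identity --- that $\omega\circ P=\omega$ together with commutativity of $D$ and the bimodule property of $P$ forces $\omega(a f)=\omega(fa)$ for $f\in D$ and $a\in\mathcal M_\omega$ --- but you then feed this directly into the two-boundary-value function $F$ from the definition of a KMS weight, obtain $F(t)=F(t+i\beta)$, and run a scalar periodicity-plus-Liouville argument, reserving faithfulness for the single final step $\omega\bigl((d-\gamma_t(d))^2\bigr)=0$. The paper instead works with the Kustermans--Vaes analytic-element form of the KMS condition ($\psi(ab)=\psi(b\gamma_{i\beta}(a))$), introduces Gaussian smoothings $Q_k(f)$, shows $\omega\bigl(a[Q_k(f)-\gamma_{-i\beta}(Q_k(f))]b\bigr)=0$ for suitable analytic $a,b\in\mathcal M_\omega$, invokes faithfulness early to conclude $Q_k(f)=\gamma_{-i\beta}(Q_k(f))$, and only then applies an operator-valued Liouville argument to $z\mapsto\gamma_z(Q_k(f))$ before letting $k\to\infty$. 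Your version is shorter and avoids the smoothing machinery entirely; the paper's version earns its extra length because the auxiliary facts it establishes (e.g.\ that $Q_k(c)$ and $\gamma_{i\beta}(Q_k(c)^*)$ lie in $\mathcal M_\omega$) are reused verbatim in the proof of Corollary \ref{77}, and because it yields the slightly stronger intermediate conclusion that each $Q_k(f)$ is fixed by the whole complex flow. Two small points of hygiene in your write-up: the dense test set should be phrased as the \emph{positive} $d\in D$ with $\omega(d)<\infty$ (the inequality $d^2\le\|d\|\,d$ needs positivity, and $\omega(d)$ is only defined there), with $D_{sa}$ reached afterwards by differences; and the identity $\omega(P(x))=\omega(x)$ for $x\in\mathcal M_\omega$ deserves a word, since the hypothesis is stated only on $A_+$ --- polarization reduces it to positive elements of finite weight, exactly as the paper implicitly does in its equation (\ref{5}).
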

\begin{proof} Let $f \in D, \ f \geq 0$. Since $\omega$ is densely
  defined there is a sequence $\{a_n\}$ of positive elements in $A$
  such that $\lim_{n \to \infty} a_n = f$ and $\omega(a_n) < \infty$
  for all $n$. Then $\lim_{n \to \infty} P(a_n) = f$ and $\omega
  (P(a_n)) = \omega(a_n) < \infty$. It suffices therefore to consider
  $f \in D, \ f \geq 0$ such that $\omega(f) < \infty$ and show that
  $\gamma_t(f) = f$ for all $t \in \mathbb R$. 

We find that
\begin{equation}\label{5}
\omega(af ) = \omega(P(a)f) = \omega(fP(a)) = \omega(fa)
\end{equation}
for all $a\in \mathcal M_{\omega}$. Since $\gamma_t(\mathcal M_{\omega}) = \mathcal M_{\omega}$
and $\omega \circ \gamma_t = \omega$ for all $t$, it follows from
(\ref{5}) that
\begin{equation}\label{5'}
\omega(a\gamma_t(f) ) =  \omega(\gamma_t(f)a)
\end{equation}
for all $a\in \mathcal M_{\omega}$ and all $t \in \mathbb R$. For $k \in \mathbb N$, $a \in A$, set
$$
Q_k(a) = \sqrt{\frac{k}{\pi}} \int_{\mathbb R} e^{-kt^2} \gamma_t(a) \
dt .
$$
Note that $Q_k(a)$ is analytic for $\gamma$ and that $\lim_{k \to
  \infty} Q_k(a) = a$.

\begin{obs}\label{6} Assume that $a \in \mathcal M_{\omega}$. It follows that 
$$
 \sqrt{\frac{k}{\pi}} \int_{\mathbb R} e^{-k(t + is)^2} \gamma_t(a) \
dt \in \mathcal M_{\omega}
$$
for all $s\in \mathbb R$.
\end{obs}
Indeed,
$$
 \sqrt{\frac{k}{\pi}} \int_{\mathbb R} e^{-k(t + is)^2} \gamma_t(a) \
dt = e^{ks^2} \sqrt{\frac{k}{\pi}} \int_{\mathbb R} e^{-kt^2} (\cos
(2kst) - i\sin (2kst)) \gamma_t(a) \ dt ,
$$
so it suffices to show that
$$
x = \sqrt{\frac{k}{\pi}} \int_{\mathbb R} g(t) e^{-kt^2} \gamma_t(a) \
dt \in \mathcal M_{\omega}
$$
when $a \geq 0, \ \omega(a) < \infty$ and $g \geq 0$ is continuous and bounded. Since
$$
\sqrt{\frac{k}{\pi}} \int_{\mathbb R} e^{-kt^2} \ dt = 1,
$$
it follows that $x$ is the norm limit of a sequence $\{x_n\}$ of elements of the
form
$$
x_n = \sum_{i=1}^N \alpha_i g(t_i) \gamma_{t_i}(a) 
$$
where $\alpha_i \geq 0$ and $\sum_{i=1}^N \alpha_i = 1$. Note that 
$$
\omega(x_n) = \sum_{i=1}^N \alpha_i g(t_i) \omega(\gamma_{t_i}(a)) =
\sum_{i=1}^N \alpha_i g(t_i) \omega(a) \leq \omega(a) \sup_{t \in
  \mathbb R} g(t) .
$$
Since $\omega$ is lower semi-continuous, it follows that
$\omega(x)\leq \omega(a) \sup_{t \in
  \mathbb R} g(t) < \infty$, completing the proof of Observation
\ref{6}.

Since
$$
\gamma_{is}(Q_k(a)) =  \sqrt{\frac{k}{\pi}} \int_{\mathbb R}
e^{-k(t-is)^2} \gamma_t(a) \ dt,
$$
it follows from Observation \ref{6} that 
\begin{equation}\label{8}
\gamma_{is}\left(Q_k(a)\right) \in \mathcal M_{\omega}
\end{equation}
when $a \in \mathcal M_{\omega}$ and for all $s \in \mathbb R$. Let $a,b \in \mathcal M_{\omega}, \ t \in \mathbb R$. Assume that $a$ and $b$
are analytic for $\gamma$, and that $\gamma_{i\beta}(a),
\gamma_{i\beta}(b) \in \mathcal M_{\omega}$. Then
\begin{equation}\label{7}
\begin{split}
&\omega \left(a\gamma_t(f)b\right) = \omega\left(
  \gamma_t(f)b\gamma_{i\beta}(a)\right) \  \ \ \ \ \ \ \ \ \ \ \ \ \ \ \text{(using Proposition 1.12 (3) in \cite{KV})}\\
& = \omega\left( b\gamma_{i\beta}(a)\gamma_t(f)\right)  \ \ \ \ \ \
\  \ \ \ \  \ \ \ \ \ \ \ \ \ \ \ \ \ \ \ \ \ \text{(using (\ref{5'}))} \\
& = \omega\left( \gamma_{i\beta}(a)\gamma_t(f)
  \gamma_{i\beta}(b)\right) \ \ \  \ \  \ \ \ \ \ \ \ \ \ \ \ \ \ \ \text{(using Proposition 1.12 (3) in
  \cite{KV})} .
\end{split}
\end{equation}

Note that when $a,c \in \mathcal M_{\omega}$, the Cauchy-Schwarz inequality
yields the estimate
$$
\left|\omega(abc)\right|  \leq \|b\| \omega(aa^*)^{\frac{1}{2}}
\omega(c^*c)^{\frac{1}{2}} 
$$
for all $b \in A$. In particular,
$x \mapsto \omega \left( axb\right)$
and
$x \mapsto \omega\left( \gamma_{i\beta}(a)x
  \gamma_{i\beta}(b)\right)$
are both bounded functionals on $A$ and we conclude therefore from
(\ref{7}) that
\begin{equation*}\label{9}
\omega \left( aQ_k(f)b\right) = \omega\left( \gamma_{i\beta}(a)Q_k(f)
  \gamma_{i\beta}(b)\right)
\end{equation*}
for all $k \in \mathbb N$. Since
\begin{equation*}
\begin{split}
&\omega\left( \gamma_{i\beta}(a)Q_k(f)
  \gamma_{i\beta}(b)\right) = \omega \circ \gamma_{i\beta}\left( a\gamma_{-i\beta}(Q_k(f))b\right)  \\
&= \omega \left( a\gamma_{-i\beta}(Q_k(f))b\right) ,
\end{split}
\end{equation*}
it follows that
\begin{equation}\label{a11}
\omega \left( a\left[Q_k(f) - \gamma_{-i\beta}(Q_k(f))\right] b\right)
=  0 .
\end{equation}
Note now that it follows from (\ref{8}) that $Q_k(f) -
\gamma_{-i\beta}(Q_k(f)) \in \mathcal M_{\omega}$.  Since $\mathcal M_{\omega}$ is a
$*$-algebra we conclude that
$$
\left(Q_k(f) -
\gamma_{-i\beta}(Q_k(f))\right)^* \ \text{and} \ 
 \left(Q_k(f) -
\gamma_{-i\beta}(Q_k(f))\right)^*  \left(Q_k(f) -
\gamma_{-i\beta}(Q_k(f))\right) 
$$
are also elements of $\mathcal M_{\omega}$. Similarly,
\begin{equation*}
\gamma_{i\beta} \left( \left(Q_k(f) -
\gamma_{-i\beta}(Q_k(f))\right)^*\right) = \gamma_{i\beta}(Q_k(f)) -
\gamma_{2i\beta} (Q_k(f)) \in \mathcal M_{\omega}
\end{equation*}
and
\begin{equation*}
\begin{split}
&\gamma_{i\beta} \left( \left(Q_k(f) -
\gamma_{-i\beta}(Q_k(f))\right)^*  \left(Q_k(f) -
\gamma_{-i\beta}(Q_k(f))\right)\right) \\
&= \gamma_{i\beta}\left(  \left(Q_k(f) -
\gamma_{-i\beta}(Q_k(f))\right)^* \right) \gamma_{i\beta}\left(Q_k(f) -
\gamma_{-i\beta}(Q_k(f)) \right)  \ \in \ \mathcal M_{\omega} .
\end{split}
\end{equation*}
We can then choose 
$$
a = \left(Q_k(f) -
\gamma_{-i\beta}(Q_k(f))\right)^*
$$
and
$$
b =  \left(Q_k(f) -
\gamma_{-i\beta}(Q_k(f))\right)^*  \left(Q_k(f) -
\gamma_{-i\beta}(Q_k(f))\right)  
$$
in (\ref{a11}) to deduce that
$\omega(b^2) = 0$. Since $\omega$ is faithful by assumption it follows that $b = 0$, i.e.
$$
Q_k(f) = \gamma_{-i\beta}(Q_k(f)) .
$$ 
Since $Q_k(f)$ is analytic for $\gamma$ it follows that
\begin{equation}\label{13}
\gamma_z(Q_k(f)) = \gamma_{z-i \beta}(Q_k(f))
\end{equation}
for all $ z \in \mathbb C$. Set
$$
D = \sup \left\{\left\| \gamma_{is} (Q_k(f))\right\| : \ s \in \left[
    -|\beta|,|\beta| \right] \right\}  .
$$
Using (\ref{13}) it follows that
$\left\| \gamma_{t+is}(Q_k(f))\right\| = \left\|
  \gamma_{is}(Q_k(f))\right\| \leq D$
for all $t,s \in \mathbb R$. Then Liouvilles theorem implies
that 
$$
z \mapsto \gamma_z(Q_k(f))
$$
is constant, i.e. $\gamma_t(Q_k(f)) = Q_k(f)$ for all $t \in \mathbb
R$. This conclusion holds for all $k \in \mathbb N$ and since $\lim_{k
  \to \infty} Q_k(f) = f$, this completes the proof.
\end{proof}

\begin{cor}\label{77} Let $A$ be a simple $C^*$-algebra and $\gamma$ a
  continuous one-parameter group of automorphisms on $A$. Let $D \subseteq A$ be an
  abelian $C^*$-subalgebra and $P : A \to D$ a conditional
  expectation.

Assume that $\omega$ is a $\beta$-KMS weight for $\gamma$, $\beta \neq
0$, such that $\omega \circ P = \omega$. It follows that $\gamma_t(d)
= d$ for all $t \in \mathbb R$ and all $d \in D$.
\end{cor}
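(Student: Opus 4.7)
The plan is to deduce Corollary \ref{77} from Proposition \ref{4} by establishing that, under the simplicity assumption on $A$, the weight $\omega$ is automatically faithful, after which Proposition \ref{4} applies verbatim.

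For the faithfulness, I would consider the GNS triple $(\pi_\omega, H_\omega, \eta_\omega)$ associated to $\omega$. Simplicity of $A$ together with $\omega \neq 0$ forces $\pi_\omega$ to be injective, since its kernel is a proper two-sided closed ideal of $A$. The key additional input, supplied by the theory of KMS weights on $C^*$-algebras (cf.\ \cite{Ku} or \cite{KV}), is that $\omega$ extends canonically to a faithful normal semifinite weight $\tilde{\omega}$ on the enveloping von Neumann algebra $M = \pi_\omega(A)''$, with $\tilde{\omega}\circ\pi_\omega = \omega$ on $A_+$. Granted this, if $a \in A_+$ satisfies $\omega(a) = 0$, then $\tilde{\omega}(\pi_\omega(a)) = 0$, so $\pi_\omega(a) = 0$ by faithfulness of $\tilde{\omega}$, and hence $a = 0$ by faithfulness of $\pi_\omega$. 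With $\omega$ now known to be faithful, Proposition \ref{4} applied to $A$, $\gamma$, $D$, $P$ and $\omega$ yields $\gamma_t(d) = d$ for all $t \in \mathbb{R}$ and $d \in D$, which is the assertion.

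The principal technical obstacle is the invocation that the extension $\tilde{\omega}$ to $\pi_\omega(A)''$ is faithful: this rests on modular theory, where the KMS relation provides a modular automorphism group for $\tilde{\omega}$ and thereby forces it to be a faithful normal semifinite weight. For a more self-contained route, one can argue directly that $\omega(a) = 0$ and $b := a^{1/2}$ give $\eta_\omega(b) = 0$, whence $\omega((yb)^*(yb)) = \omega(by^*yb) = 0$ for all $y \in A$, so that $\omega$ vanishes on the hereditary $C^*$-subalgebra $\overline{bAb}$; applying (\ref{not1}) to translate this vanishing across the strip via $\gamma_{-i\beta/2}$, combined with simplicity of $A$ to rule out nontrivial $\gamma$-invariant null ideals, then forces $b = 0$. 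The short route via the extension is, however, considerably cleaner.
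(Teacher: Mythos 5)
Your overall strategy coincides with the paper's: Corollary \ref{77} is deduced from Proposition \ref{4} by showing that simplicity forces $\omega$ to be faithful. Where you differ is in how faithfulness is obtained. You pass to the GNS representation, note that $\ker\pi_\omega$ is a proper closed two-sided ideal (hence zero), and invoke the fact that a KMS weight induces a \emph{faithful} normal semifinite weight $\tilde\omega$ on $\pi_\omega(A)''$ with $\tilde\omega\circ\pi_\omega=\omega$; this is a genuine theorem (it is the weight analogue of the statement that the cyclic vector of a KMS state is separating for $\pi_\omega(A)''$, and is established in Kustermans' paper \cite{Ku}, which the authors already cite), so your short route is valid --- though you should cite that result precisely rather than gesture at ``modular theory forces faithfulness'', since modular theory is usually \emph{built from} the faithful extension rather than the other way around. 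The paper instead argues entirely at the $C^*$-level: assuming $\omega(b^2)=0$ with $b=b^*$, it uses the smoothing operators $Q_k(c)$, the KMS identity $\omega(xy)=\omega(y\gamma_{i\beta}(x))$ and Cauchy--Schwarz to get $\omega(c^*b^2c)=0$ for all $c\in\mathcal M_\omega$, so that $\omega$ vanishes on the dense ideal $\Span\,\mathcal M_\omega b^2\mathcal M_\omega$ when $b\neq0$, and lower semi-continuity then forces $\omega=0$, a contradiction. Your ``self-contained'' fallback is essentially this argument in sketch form; the one imprecision there is the phrase about ruling out ``$\gamma$-invariant null ideals'' --- the ideal generated by $b$ need not be $\gamma$-invariant, and none is needed: the KMS flip is used only to move $b^2$ to the inside of $\omega(c^*\,\cdot\,c)$, after which simplicity plus lower semi-continuity of $\omega$ finish the job. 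In short: your von Neumann algebraic route is correct and cleaner but imports heavier machinery; the paper's route is longer but elementary and self-contained given \cite{KV}.
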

\begin{proof} It suffices to show that $\omega$ is faithful. So assume
  that $b = b^* \in A$ and that $\omega(b^2) = 0$. For a $c \in \mathcal{M}_{\omega}$ we know from the above proof that $Q_{k}(c), \gamma_{i\beta} (Q_{k}(c)^{*}) \in \mathcal{M}_{\omega}$, hence by an application of the Cauchy-Schwarz inequality
$$
| \omega(Q_{k}(c)^{*}b^{2} Q_{k}(c)) |^{2} = | \omega(b^{2}
Q_{k}(c)\gamma_{i\beta} (Q_{k}(c)^{*})) |^{2} \leq 0 .
$$
Lower semi-continuity now implies that $\omega(c^{*}b^{2} c)=0$ and by using Cauchy-Schwarz again we deduce that 
\begin{equation}\label{11}
\omega\left( \Span \mathcal M_{\omega} b^2 \mathcal M_{\omega}\right) = \{0\}.
\end{equation}
Since $\mathcal M_{\omega}$ is dense in $A$ the closure of $ \Span \mathcal M_{\omega}
b^2 \mathcal M_{\omega}$ is a (closed twosided) ideal in $A$. If $b \neq 0$
this ideal must be all of $A$ because we assume that $A$ is
simple. But then we reach a contradiction the following way: Let $ a
\geq 0$. Choose a sequence $\{x_n\} \subseteq  \Span \mathcal M_{\omega} b^2
\mathcal M_{\omega}$ such that $\lim_{n \to \infty} x_n = \sqrt{a}$. Since
$x_nx_n^* \in  \Span \mathcal M_{\omega} b^2 \mathcal M_{\omega}$ and $\lim_{n \to
  \infty} x_nx_n^* = a$, it follows from (\ref{11}) and the lower
semi-continuity of $\omega$ that $\omega(a) = 0$. This is a
contradiction because $\omega \neq 0$. Hence $b = 0$.

\end{proof}

\subsection{One-parameter groups trivial on the diagonal}

\begin{prop}\label{15} Let $\mathcal G$ be a locally compact Hausdorff
  \'etale groupoid and $\alpha = (\alpha_t)_{t \in \mathbb R}$ a
  continuous one-parameter group of automorphisms on $C^*_r(\mathcal
  G)$ such that
$$
\alpha_t(f) = f
$$
for all $f \in C_0\left(\mathcal G^{(0)}\right)$ and all $t \in
\mathbb R$. Assume that the elements of $\mathcal G^{(0)}$ with
trivial isotropy group in $\mathcal G$ are dense in $\mathcal G^{(0)}$. There is a continuous homomorphism $c : \mathcal G \to \mathbb R$ such
that
$$
\alpha_t(g)(\xi) = e^{it c(\xi)} g(\xi)
$$
for all $t\in \mathbb R$, all $g \in C_c(\mathcal G)$ and all $\xi \in
\mathcal G$. 

\end{prop}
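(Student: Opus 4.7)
The plan is to recover $c$ by examining how $\alpha_t$ acts on elements of $C_c(\mathcal G)$ supported in open bisections. Because $\alpha_t$ fixes $C_0(\mathcal G^{(0)})$ pointwise, for $f\in C_c(\mathcal G)$ supported in an open bisection $W$ one has $\alpha_t(f)^*\alpha_t(f) = f^*f$ and more generally $\alpha_t(f)^*h\,\alpha_t(f) = f^*hf$ for all $h\in C_0(\mathcal G^{(0)})$. Thus $\alpha_t(f)$ still normalizes the diagonal with the same source data as $f$.

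Set $\mathcal G^{\mathrm{tr}} := s^{-1}(\{x\in \mathcal G^{(0)} : \mathcal G_x^x=\{x\}\})$; this is dense in $\mathcal G$ because every open bisection is homeomorphic under $s$ to its source image and the trivially-isotropic units are dense in $\mathcal G^{(0)}$. Fix $\xi \in \mathcal G^{\mathrm{tr}}$, an open bisection $W\ni\xi$, and $f\in C_c(W)$ with $f(\xi)=1$. In $\pi_{s(\xi)}$ on $l^2(\mathcal G_{s(\xi)})$, the vector $v:=\pi_{s(\xi)}(\alpha_t(f))\delta_{s(\xi)}$ satisfies $\langle \pi_{s(\xi)}(h)v,v\rangle = (f^*hf)(s(\xi)) = h(r(\xi))$ for every $h\in C_0(\mathcal G^{(0)})$; expanding $v=\sum_g v_g\delta_g$ forces $r(g)=r(\xi)$ for all $g$ with $v_g \neq 0$, and triviality of isotropy at $r(\xi)$ (transported from $s(\xi)$ via $\xi$) reduces this to the single element $g=\xi$. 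Hence $v=\chi_\xi(t)\delta_\xi$ with $|\chi_\xi(t)|=1$. Introduce the Fourier map $\hat a(\eta):=\langle\pi_{s(\eta)}(a)\delta_{s(\eta)},\delta_\eta\rangle$: it is continuous on $\mathcal G$, restricts to the identity on $C_c(\mathcal G)$, and is injective on $C^*_r(\mathcal G)$ under topological principality (since $\widehat{a^*a}|_{\mathcal G^{(0)}}$ coincides with $P(a^*a)$ for the canonical expectation $P$, which is then faithful on positives). Our computation now reads $\widehat{\alpha_t(f)}(\xi)=\chi_\xi(t)f(\xi)$; a partition of unity extends this to all $f\in C_c(\mathcal G)$, and norm continuity of the Fourier map lifts it to $\widehat{\alpha_t(a)}(\xi)=\chi_\xi(t)\hat a(\xi)$ for every $a\in C^*_r(\mathcal G)$.

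Apply the last identity with $a=\alpha_s(g)$ for $g\in C_c(\mathcal G)$ satisfying $g(\xi)=1$, and compare with $\widehat{\alpha_{t+s}(g)}(\xi)=\chi_\xi(t+s)$ to obtain $\chi_\xi(t+s)=\chi_\xi(t)\chi_\xi(s)$, so $\chi_\xi$ is a continuous homomorphism $\mathbb R\to S^1$, and we set $\chi_\xi(t)=e^{itc(\xi)}$ with $c(\xi)\in\mathbb R$. For $\eta\in\mathcal G\setminus\mathcal G^{\mathrm{tr}}$, pick $f\in C_c(W)$ with $W\ni\eta$, $f(\eta)=1$, and a sequence $\eta_n\to\eta$ in $\mathcal G^{\mathrm{tr}}$; joint continuity of $(t,\eta)\mapsto\widehat{\alpha_t(f)}(\eta)$ (from norm continuity of $\alpha$ and continuity of the Fourier map) yields $e^{itc(\eta_n)}\to\widehat{\alpha_t(f)}(\eta)$, producing a continuous character $t \mapsto e^{itc(\eta)}$ independent of $f$ and of the sequence, so $c$ extends continuously to $\mathcal G$. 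The homomorphism property is then checked on composable $\xi,\eta$ using the identification $\alpha_t(f)(\zeta)=e^{itc(\zeta)}f(\zeta)$ (valid in $C_c(W)$ by injectivity of the Fourier map): $\alpha_t(f*g)=\alpha_t(f)*\alpha_t(g)$ evaluated at $\xi\eta$ for $f\in C_c(W), g\in C_c(V)$ with $f(\xi)=g(\eta)=1$ reduces via the bisection property to a single term equal to $e^{it(c(\xi)+c(\eta))}$, which must equal $e^{itc(\xi\eta)}$. The same identification gives the desired formula $\alpha_t(g)(\xi)=e^{itc(\xi)}g(\xi)$ on $C_c(\mathcal G)$.

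I expect the main technical hurdle to be the passage from $C_c(\mathcal G)$ to all of $C^*_r(\mathcal G)$ in the identity $\widehat{\alpha_t(a)}(\xi)=\chi_\xi(t)\hat a(\xi)$: this is unavoidable for establishing the group-homomorphism property of $\chi_\xi$, since $\alpha_s(f)$ typically sits in $C^*_r(\mathcal G)\setminus C_c(\mathcal G)$. The required ingredients — continuity and injectivity of the Fourier map on the reduced groupoid $C^*$-algebra in the topologically principal setting — are standard but need careful bookkeeping, as does the subsequent extension of $c$ off the dense set $\mathcal G^{\mathrm{tr}}$ via joint continuity.
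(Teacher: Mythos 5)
Your proof is correct and follows essentially the same route as the paper's: your ``Fourier map'' $\widehat{a}$ is Renault's embedding $j:C^*_r(\mathcal G)\to C_0(\mathcal G)$, your $\chi_\xi(t)$ is the paper's $G_t(\xi)$, and the steps establishing multiplicativity in $t$, the cocycle identity via convolution of bisection-supported functions, and continuity via density of the trivially-isotropic units all coincide with the paper's. The only real difference is that you localize $\alpha_t(f)$ at a point with trivial isotropy through the positivity computation $\langle \pi_{s(\xi)}(h)v,v\rangle = h(r(\xi))$ in the regular representation, whereas the paper uses the norm identity $\left\|\varphi f\right\| = \left\|\varphi f\right\|_{\infty}$ for functions supported in a bisection; both hinge on $\alpha_t$ fixing $C_0\left(\mathcal G^{(0)}\right)$, and the difference is cosmetic.
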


\begin{proof} We shall use the continuous linear embedding $j :
  C^*_r\left(\mathcal G\right) \to C_0(\mathcal G)$ introduced by
  Renault in Proposition 4.2 in \cite{Re1}.

\begin{obs}\label{16} Let $f \in C_c(\mathcal G)$ be supported in an open subset $U
  \subseteq \mathcal G$ such that $r: U \to \mathcal G^{(0)}$ is injective. Assume that $f(\xi) = 0$ for some $\xi \in
  U$. It follows that $j(\alpha_t(f))(\xi) = 0$ for all $t\in \mathbb R$. 
\end{obs} 
To prove this, let $\epsilon > 0$. There is an open bisection $W$ of $\xi$ such
that $W \subseteq U$ and $|f(\mu)| \leq \epsilon$ for all $\mu \in
W$. Let $\varphi \in C_c\left(\mathcal G^{(0)}\right)$ be such that $0
\leq \varphi \leq 1$, $\supp \varphi \subseteq r(W)$ and $\varphi(r(\xi))
= 1$. By use of Proposition 4.2 in \cite{Re1} we find that 
\begin{equation}\label{17}
j(\alpha_t(f))(\xi) = \varphi(r(\xi))j(\alpha_t(f))(\xi) = j(\varphi
\alpha_t(f))(\xi) = j(\alpha_t(\varphi f))(\xi).
\end{equation}
Note that 
$\supp (\varphi f) \subseteq W$ and that $\left\|\varphi f\right\|_{\infty} \leq
\epsilon$. It follows that
$$
\left\|j(\alpha_t(\varphi f))\right\|_{\infty}  \leq
\left\|\alpha_t(\varphi f)\right\| = \left\|\varphi
  f\right\| = \left\|\varphi f\right\|_{\infty} \leq\epsilon ,
$$
where the last identity follows from Lemma 2.4 in \cite{Th1}.
In particular, $\left| j(\alpha_t(\varphi f))(\xi)\right| \leq \epsilon$,
and then (\ref{17}) shows that $\left|j(\alpha_t(f))(\xi)\right| \leq
\epsilon$. This proves Observation \ref{16}.

In the same way we obtain the following
\begin{obs}\label{16'} Let $f \in C_c(\mathcal G)$ be supported in an open subset $U
  \subseteq \mathcal G$ such that $s: U \to \mathcal G^{(0)}$ is injective. Assume that $f(\xi) = 0$ for some $\xi \in
  U$. It follows that $j(\alpha_t(f))(\xi) = 0$ for all $t\in \mathbb R$. 
\end{obs}

\begin{obs}\label{18} Let $\xi \in \mathcal G$, and let $h, h' \in
  C_c(\mathcal G)$ be supported in (not necessarily the same) open
  bisections in $\mathcal G$. Assume that $h(\xi) = h'(\xi) = 1$. Then
\begin{equation}\label{ab19}
j(\alpha_t(h))(\xi) = j\left(\alpha_t(h')\right)(\xi)
\end{equation}
for all $t \in \mathbb R$.
\end{obs}
To show this, let $h\cdot h'$ be the pointwise product of $h$ and
$h'$. It follows from Observation \ref{16} that
$$
j(\alpha_t(h\cdot h' - h'))(\xi ) = j(\alpha_t(h\cdot h' - h))(\xi ) =
0,
$$
which yields (\ref{ab19}): $j(\alpha_t(h))(\xi) = j(\alpha_t(h\cdot
h'))(\xi) = j(\alpha_t(h'))(\xi)$.

It follows from Observation \ref{18} that we can define a map $G_t :
\mathcal G \to \mathbb C$ such that
$$
G_t(\xi) = j(\alpha_t(h))(\xi),
$$
where $h$ is any element of $C_c(\mathcal G)$ which is supported in an
open bisection and takes the value $1$ at $\xi$. Note that $G_t$ is
continuous by construction.

\begin{obs}\label{20} For every $f \in C_c(\mathcal G)$ and every $\xi
  \in \mathcal G$,
\begin{equation}\label{21}
j(\alpha_t(f))(\xi) = G_t(\xi) f(\xi).
\end{equation}
\end{obs}

To show this, we may assume that there are open bisections $U
\subseteq V$ such that $\supp f \subseteq U$ and $\overline{U}
\subseteq V$. Assume first that $\xi \notin \overline{U}$. We must
show that $j(\alpha_t(f))(\xi) = 0$ in this case. By continuity and
the assumption on $\mathcal G$ we may assume that $s(\xi)$ has
trivial isotropy. If $\mu \in U$ and $r(\mu) = r(\xi), \ s(\mu) =
s(\xi)$, we see that
$$
r(\mu^{-1} \xi) = s(\mu) = s(\xi) \ \text{and} \
s(\mu^{-1}\xi) = s(\xi)
$$  
which is impossible since $\xi  \neq \mu$. It follows that we can
write $f$ as a finite sum
$$
f = \sum_i f_i
$$
such that each $f_i \in C_c(U)$ is supported in an open set $W_i
\subseteq U$ such
that either $s(\xi) \notin s(\overline{W_i})$ or $r(\xi) \notin
r(\overline{W_i})$. It follows that $j(\alpha_t(f_i))(\xi) = 0$; in
the first case thanks to Observation
\ref{16'}, in the second thanks to
Observation \ref{16}. Hence
$$
j(\alpha_t(f))(\xi) = \sum_ij(\alpha_t(f_i))(\xi) = 0 ,
$$
as desired. Assume then that $\xi \in \overline{U} \subseteq
V$. Choose $\epsilon > 0$ such that
$f(\xi) + \epsilon \neq 0$ and a
function $\varphi \in C_c(V)$ such that $\varphi(\xi) = 1$. Then 
$$
j(\alpha_t(f + \epsilon \varphi))(\xi) = 
j\left(\alpha_t\left(\frac{f + \epsilon \varphi}{f(\xi) + \epsilon}\right)\right)(\xi)(f(\xi) + \epsilon)
=G_t(\xi)(f(\xi) + \epsilon) .
$$
Letting  $\epsilon \to 0$ we obtain (\ref{21}).

Note that it follows from Observation \ref{20} that
$\alpha_t\left(C_c(\mathcal G)\right) \subseteq C_c(\mathcal G)$, and
\begin{equation*}\label{20'}
\alpha_t(f)(\xi) = G_t(\xi)f(\xi)
\end{equation*}
for all $f \in C_c(\mathcal G)$ and all $\xi \in \mathcal G$. Since
$\left\|f\right\| = \left\|\alpha_t(f)\right\|$ this implies that
$\left|G_t(\xi) \right| = 1$. Furthermore, if $h \in C_c(\mathcal G)$
is supported in a bisection and $h(\xi) = 1$, we find that
\begin{equation*}\label{24}
\begin{split}
&G_{t+s}(\xi) = \alpha_t\left(\alpha_s(h)\right)(\xi) \\
& =
\alpha_s(h)(\xi)
\alpha_t\left(\frac{\alpha_s(h)}{\alpha_s(h)(\xi)}\right)(\xi) =
\alpha_s(h)(\xi) G_t(\xi) \\
& = G_s(\xi)G_t(\xi) .
\end{split}
\end{equation*}
Since $t \mapsto G_t(\xi)$ is continuous, this implies that there is a unique
real-valued function $c : \mathcal G \to \mathbb R$ such that
\begin{equation}\label{25}
G_t(\xi) = e^{it c(\xi)} .
\end{equation}
To show that $c$ is a homomorphism, let
$\gamma_1,\gamma_2 \in \mathcal G$ such that $s(\gamma_1) =
r(\gamma_2)$. Set $\gamma = \gamma_1\gamma_2$. Let $U$ be an open
bisection containing $\gamma$ and $U_i$ an open
bisection containing $\gamma_i, \ i =1,2$, such that $\mu_1\mu_2 \in
U$ when $(\mu_1,\mu_2) \in \mathcal G^{(2)} \cap (U_1 \times
U_2)$. Choose $h_i \in C_c(U_i)$ such that $h_i(\gamma_i) = 1$. Then
$h_1h_2(\gamma) = 1$ and
\begin{equation}\label{26}
G_t(\gamma) = j(\alpha_t(h_1h_2))(\gamma) =
\alpha_t(h_1)\alpha_t(h_2)(\gamma) =
\alpha_t(h_1)(\gamma_1)\alpha_t(h_2)(\gamma_2) = G_t(\gamma_1)G_t(\gamma_2).
\end{equation}
Hence $G_t$ is a homomorphism as asserted. Combining (\ref{25}) and
(\ref{26}) and taking derivatives with respect to $t$, it follows
that $c$ is a homomorphism, i.e.
$$
c(\gamma_1 \gamma_2) = c(\gamma_1) + c(\gamma_2)
$$
when $s(\gamma_1) = r(\gamma_2)$. 

Finally, to show that $c$ is continuous, let $\xi \in \mathcal
G$ and $\epsilon > 0$ be given. Choose open bisections $
U \subseteq V$ such that $\xi \in U \subseteq \overline{U}
\subseteq V$ and $h \in C_c(V)$ a function such that $h =1$ on
$\overline{U}$. Then
$$
G_t(\gamma) = \alpha_t(h)(\gamma)
$$
for all $t \in \mathbb R$ and all $\gamma \in U$. Let $K \subseteq \mathbb R$
be a compact set. There are finitely many points $t_i \in K,i = 1,2,
\cdots, N$, such that for every $t \in K$ there is an $i$ such that
$$
\left\|\alpha_t(h) -\alpha_{t_i}(h)\right\|_{\infty}  =
\left\|\alpha_t(h) -\alpha_{t_i}(h)\right\| \leq \epsilon .
$$
By continuity of $\alpha_{t_i}(h)$ there is an open neighborhood $W
\subseteq U$
of $\xi$ such that 
$$
\left|\alpha_{t_i}(h)(\gamma) - \alpha_{t_i}(h)(\xi)\right| \leq
\epsilon
$$
for all $\gamma \in W$ and $i=1,2, \dots, N$. It follows that $\left|G_t(\gamma)
  -G_t(\xi)\right| \leq 3 \epsilon$ for all $t \in K$ and all $\gamma
\in W$. By Pontryagin duality this implies that $c$ is continuous.

\end{proof}

\begin{thm}\label{28} Let $\mathcal G$ be a locally compact Hausdorff
  \'etale groupoid such that for at least one element $x \in \mathcal
  G^{(0)}$ the isotropy $\mathcal G_x^x$ is trivial, i.e. $\mathcal
  G_x^x = \{x\}$, and that $\mathcal G$ is minimal in the sense that
$s(r^{-1}(y))$  
is dense in $\mathcal G^{(0)}$ for all $y \in \mathcal G^{(0)}$. Let $\alpha= (\alpha_t)_{t\in \mathbb R}$ be a continuous
  one-parameter group of automorphisms on $C^*_r(\mathcal G)$ and
  assume that for some $\beta \in \mathbb R \backslash \{0\}$ there is
  a diagonal $\beta$-KMS weight for $\alpha$. Then $\alpha$ is diagonal, i.e. there is a continuous homomorphism $c : \mathcal G \to \mathbb R$ such
that
\begin{equation}\label{44}
\alpha_t(g)(\xi) = e^{it c(\xi)} g(\xi)
\end{equation}
for all $t\in \mathbb R$, all $g \in C_c(\mathcal G)$ and all $\xi \in
\mathcal G$. 
\end{thm}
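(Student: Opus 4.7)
My plan is to combine Corollary \ref{77} with Proposition \ref{15}: the former should yield that $\alpha$ fixes $C_{0}(\mathcal{G}^{(0)})$ pointwise, and the latter then produces the cocycle $c$ presenting $\alpha$ as a diagonal action via (\ref{44}). The preparatory step is to argue that $C^{*}_{r}(\mathcal{G})$ is simple, so that Corollary \ref{77} becomes applicable.

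First I would verify that $\mathcal{G}$ is topologically principal. Let $x_{0}\in \mathcal{G}^{(0)}$ be the point with trivial isotropy furnished by the hypothesis. For any $\gamma \in \mathcal{G}$ with $s(\gamma)=x_{0}$ and any $\eta \in \mathcal{G}_{r(\gamma)}^{r(\gamma)}$, the element $\gamma^{-1}\eta \gamma$ belongs to $\mathcal{G}_{x_{0}}^{x_{0}}=\{x_{0}\}$, forcing $\eta = r(\gamma)$. Hence every point of the orbit $r(\mathcal{G}_{x_{0}})$ has trivial isotropy, and minimality makes this orbit dense in $\mathcal{G}^{(0)}$. Together with the standing assumptions (Hausdorff, \'etale, second countable, minimal) a standard theorem on groupoid $C^{*}$-algebras then guarantees that $C^{*}_{r}(\mathcal{G})$ is simple.

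Let $\omega$ denote the diagonal $\beta$-KMS weight supplied by hypothesis; $\omega = \varphi_{m}$ for some regular Borel measure $m$ on $\mathcal{G}^{(0)}$, equivalently $\omega \circ P = \omega$, where $P : C^{*}_{r}(\mathcal{G})\to C_{0}(\mathcal{G}^{(0)})$ is the canonical conditional expectation. I apply Corollary \ref{77} with $A = C^{*}_{r}(\mathcal{G})$, $D = C_{0}(\mathcal{G}^{(0)})$, $P$ as above and $\gamma = \alpha$, obtaining $\alpha_{t}(f) = f$ for every $t\in \mathbb{R}$ and every $f \in C_{0}(\mathcal{G}^{(0)})$. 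Density of trivial-isotropy points having been established, Proposition \ref{15} then applies and produces a continuous homomorphism $c:\mathcal{G}\to\mathbb{R}$ satisfying $\alpha_{t}(g)(\xi) = e^{itc(\xi)} g(\xi)$ for all $g \in C_{c}(\mathcal{G})$, $\xi \in \mathcal{G}$, $t \in \mathbb{R}$, which is (\ref{44}).

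The only non-routine external ingredient is the simplicity of $C^{*}_{r}(\mathcal{G})$; once that is granted the proof is a direct assembly of results already proved in the paper, so I expect the simplicity step to be the main (and essentially only) obstacle.
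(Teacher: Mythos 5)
Your proposal is correct and follows essentially the same route as the paper: the paper's proof is exactly "combine Corollary \ref{77} and Proposition \ref{15}," with simplicity of $C^*_r(\mathcal G)$ obtained from the single trivial-isotropy unit plus minimality (the paper cites Corollary 2.18 in \cite{Th1} for this, whereas you sketch the topological-principality argument yourself). No gaps.
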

\begin{proof} Combine Corollary \ref{77} and Proposition \ref{15}, using that in
  the presence of a single unit with trivial isotropy group the
  minimality of $\mathcal G$ is equivalent to the simplicity of
  $C^*_r(\mathcal G)$ by Corollary 2.18 in \cite{Th1}. 
\end{proof}

We can now put the pieces together for a \emph{proof of Theorem \ref{iff}:}
$1) \Rightarrow 3)$ follows from Proposition \ref{4}. That 3) is equivalent to
4) follows from a standard argument using that $\mathcal G^{(0)}$ is
totally disconnected. The implication $3) \Rightarrow
5)$ follows from Proposition \ref{15} and $5)
\Rightarrow 2)$ from Corollary \ref{75}. This gives the
equivalence of all five conditions since $2) \Rightarrow 1)$ is trivial.

\begin{example}\label{29} Let $\mathcal G = \mathbb F_2$ be the free group on two
  generators. Then $C^*_r(\mathbb F_2)$ is a simple $C^*$-algebra and
  $C_0(\mathcal G^{(0)}) = \mathbb C1$. Let $A = A^* \in C^*_r(\mathbb
  F_2)$ and set
$$
\alpha_t(a) = e^{itA}ae^{-itA} .
$$
Note that $\alpha_t$ acts trivially on  $C_0(\mathcal G^{(0)}) =
\mathbb C1$. Let $U_x, x \in \mathbb F_2$, be the canonical unitaries
generating $C^*_r(\mathbb F_2)$. Assume that there is a
homomorphism $c : \mathbb F_2 \to \mathbb R$ such that $\alpha_t(U_x)
= e^{itc(x)}U_x$ for all $t,x$. By differentiation this leads to the
conclusion that
$AU_x-U_xA = c(x)U_x$
and hence that
$U_x^*AU_x = A + c(x)1$. The last equation implies that the spectrum $\sigma(A)$ of $A$
satisfies $\sigma(A) = \sigma(A) + c(x)$, i.e. $c(x) = 0$. But then
$\alpha_t(U_x) = U_x$ for all $t,x$, i.e. $\alpha_t = \id$ for all $t
\in \mathbb R$. This implies by differentiation that
$A X = XA$ for all $X \in C^*_r(\mathbb F_2)$, i.e. $A$ is in the center of
$C^*_r(\mathbb F_2)$. So by choosing $A \notin \mathbb R 1$, we have
an example showing that Proposition \ref{15} does not always hold when there
are no units with trivial isotropy in $\mathcal G$. In relation to Theorem
\ref{iff} note that there are $\beta$-KMS weights for $\alpha$ for all
$\beta \in \mathbb R$. Indeed, when $\omega$ is the tracial state on
$C^*_r(\mathbb F_2)$, the functional
$$
C^*_r(\mathbb F_2) \ni a \mapsto \omega\left( e^{-\beta A}a\right)
$$
is a bounded $\beta$-KMS weight. Since condition 3) in Theorem
\ref{iff} holds while 5) does not, it follows that it is necessary, in
Theorem \ref{iff}, to assume the existence of a unit with trivial
isotropy group.

Similarly, by considering a disjoint union $\mathbb F_2 \sqcup \mathcal H$, where
$\mathcal H$ is an appropriate groupoid, it is easy to obtain examples
showing that the implication $4) \Rightarrow 1)$ in Theorem \ref{iff} fails in
general if $\mathcal G$ is not minimal.

\end{example}

\section{Applications to graph $C^*$-algebras}

In this section we apply the results obtained above to the study of
KMS weights on graph $C^*$-algebras. For this we first show how a
graph $C^*$-algebra can be realized as the groupoid $C^*$-algebra of a
locally defined
local homeomorphism as it was introduced by Renault in
\cite{Re2}. Recall that graph $C^*$-algebras were originally
introduced for row-finite graphs in \cite{KPRR} as the $C^*$-algebra of the
left-shift on the space of infinite paths in the graph. We show that in general, when the graph may have infinite emitters, its
$C^*$-algebra is still the groupoid $C^*$-algebra of a local
homeomorphism which is generally only defined on a dense open subset of a locally
compact Hausdorff space.

\subsection{The Renault groupoid of a local homeomorphism}

Let $X$ be a locally compact second countable Hausdorff space. Let $U
\subseteq X$ be an open subset and $\varphi : U \to X$ a local
homeomorphism, i.e. for every $u \in U$ there is an open subset $V
\subseteq U$ such that $u \in V$, $\varphi(V)$ is open and $\varphi : V \to \varphi(V)$ is
a homeomorphism. Set $\varphi^0 = \id_X$ (with domain $D(\varphi^0) = X$) and for $n \geq 1$, set 
$$
 D(\varphi^n) = U \cap \varphi^{-1}(U) \cap \varphi^{-2}(U) \cap \cdots \cap
\varphi^{-n+1}(U)
$$ 
and let $\varphi^n$
be the map 
$$
\varphi^n = \varphi \circ \varphi \circ \cdots \circ \varphi  : D(\varphi^n) \to X . 
$$  
Set
$$
\mathcal G_{\varphi} = \left\{ (x, n-m,y) \in X \times \mathbb Z \times X: \ \ x \in D\left(\varphi^n\right), \ y \in
  D\left(\varphi^m\right), \ \varphi^n(x) = \varphi^m(y) \right\} 
$$
which is a groupoid with product $(x,k,y) (y,l,z) = (x,k+l,z)$ and
inversion $(x,k,y)^{-1} = (y,-k,x)$. Sets of the form
$$
\left\{ (x,n-m,y) :  \ \varphi^n(x) = \varphi^m(y), \  x \in W, \ y \in V\right\} 
$$
for some open subsets $W \subseteq D(\varphi^n), \ V \subseteq
D(\varphi^m)$, constitute a basis for a topology in $\mathcal G_{\varphi}$ which
turns it into a locally compact second countable Hausdorff \'etale groupoid.

Let $F : X \to \mathbb R$ be a function which is continuous on $U$. We can then define
$c_F : \mathcal G_{\varphi} \to \mathbb R$ such that
$$
c_F(x,n-m,y) = \sum_{i=0}^{n} F\left(\varphi^i(x)\right) - \sum_{i=0}^m
F\left(\varphi^i(y)\right).
$$
Note that $c_F$ is a continuous homomorphism, and if $F' : X \to \mathbb R$ is a function which agrees with
$F$ on $U$, then $c_{F'} = c_F$.

\begin{prop}\label{31} Let $c : \mathcal G_{\varphi} \to \mathbb R$ be a
  continuous homomorphism. There is a map $F : X \to
  \mathbb R$ which is continuous on $U$ such that $c = c_F$.
\end{prop}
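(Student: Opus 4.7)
The natural candidate is to define $F(x) = c(x,1,\varphi(x))$ for $x \in U$ (which is well-defined since $\varphi^1(x) = \varphi^0(\varphi(x))$, so $(x,1,\varphi(x)) \in \mathcal{G}_\varphi$), and to extend $F$ arbitrarily, say by $F \equiv 0$, to $X \setminus U$. The plan is to verify that this $F$ is continuous on $U$ and that $c_F$ agrees with $c$ on all of $\mathcal{G}_\varphi$.

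For continuity, I would argue that the map $\iota : U \to \mathcal{G}_\varphi$ given by $x \mapsto (x,1,\varphi(x))$ is continuous. Concretely, for any $x_0 \in U$, choose an open neighborhood $V \subseteq U$ of $x_0$ on which $\varphi|_V$ is a homeomorphism onto the open set $\varphi(V)$; then $\iota(V)$ is exactly the basic open bisection $\{(x,1,y) : x \in V,\ y \in \varphi(V),\ \varphi(x) = y\}$, and $\iota$ is a homeomorphism onto its image. Since $c$ is continuous, $F = c \circ \iota$ is continuous on $U$.

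To show $c = c_F$, I would exploit the cocycle property. For $x \in D(\varphi^n)$, all of $x,\varphi(x),\ldots,\varphi^{n-1}(x)$ lie in $U$, and one has the factorisation in $\mathcal{G}_\varphi$
\begin{equation*}
(x,n,\varphi^n(x)) = (x,1,\varphi(x))(\varphi(x),1,\varphi^2(x)) \cdots (\varphi^{n-1}(x),1,\varphi^n(x)),
\end{equation*}
so applying the homomorphism $c$ yields $c(x,n,\varphi^n(x)) = \sum_{i=0}^{n-1} F(\varphi^i(x))$. Then for a general element $(x,n-m,y) \in \mathcal{G}_\varphi$ with $z := \varphi^n(x) = \varphi^m(y)$, write
\begin{equation*}
(x,n-m,y) = (x,n,z)\,(y,m,z)^{-1},
\end{equation*}
and conclude $c(x,n-m,y) = \sum_{i=0}^{n-1} F(\varphi^i(x)) - \sum_{i=0}^{m-1} F(\varphi^i(y))$. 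This matches the stated formula for $c_F$, because the two additional boundary terms $F(\varphi^n(x))$ and $F(\varphi^m(y))$ in that formula are equal and cancel.

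There is no serious obstacle here; the only point worth double-checking is that the construction $F(x) = c(x,1,\varphi(x))$ really does land in a continuous section of an \'etale bisection, and that the telescoping identity above uses that $c$ is a \emph{groupoid} homomorphism (so that composability of the factors, which follows from $x \in D(\varphi^n)$, is automatic). Well-definedness of $c_F$ under the non-unique choice of $(n,m)$ representing a fixed element $(x,k,y)$ is built into the argument, since $c$ itself is well-defined and the computation above recovers $c_F$ from $c$.
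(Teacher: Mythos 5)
Your proposal is correct and is essentially the paper's own proof: the paper defines exactly the same $F$ (namely $F(x)=c(x,1,\varphi(x))$ on $U$ and $0$ elsewhere) and leaves the verification as ``straightforward,'' which your telescoping and continuity arguments carry out in detail, including the correct observation that the boundary terms $F(\varphi^n(x))=F(\varphi^m(y))$ cancel in the stated formula for $c_F$.
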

\begin{proof} Define $F : X \to \mathbb R$ such that
$$
F(x) = \begin{cases} c(x,1,\varphi(x)), &  \ x \in U \\ 0, & \ x \notin U .
\end{cases}
$$
It is straightforward to verify that $F$ is continuous on $U$
and that $c = c_F$.
\end{proof}

It follows that the continuous homomorphisms $\mathcal G \to \mathbb R$
are in bijective correspondence with the continuous maps $U \to
\mathbb R$.



A point $x \in X$ is \emph{aperiodic} under $\varphi$ when 
$$
x \in D(\varphi^n) \cap D(\varphi^m), \ \varphi^n(x) = \varphi^m(x) \ \Rightarrow
\ n = m.
$$
Under the identification of $X$
with the unit space of $\mathcal G_{\varphi}$ the aperiodic points are the
elements with trivial isotropy group. We can therefore combine
Proposition \ref{31} with Proposition \ref{15} to obtain the
following.

\begin{prop}\label{40} Let $X$ be a locally compact second countable Hausdorff space,
  $U \subseteq X$ an open subset and $\varphi : U \to X$ a local
  homeomorphism. Assume that $\alpha = (\alpha_t)_{t
      \in \mathbb R}$ is a
  continuous one-parameter group of automorphisms on $C^*_r(\mathcal
  G_{\varphi})$ such that
$$
\alpha_t(f) = f
$$
for all $f \in C_0\left(X\right) \subseteq C^*_r\left(\mathcal G_{\varphi}\right)$ and all $t \in
\mathbb R$. Assume also that the aperiodic points of $\varphi$ are dense in
$X$.

There is a continuous map $F : U \to \mathbb R$ such
that
$$
\alpha_t(g)(\xi) = e^{it c_F(\xi)} g(\xi)
$$
for all $t\in \mathbb R$, all $g \in C_c(\mathcal G_{\varphi})$ and all $\xi \in
\mathcal G_{\varphi}$. 

\end{prop}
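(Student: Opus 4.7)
The plan is to obtain Proposition \ref{40} by directly concatenating Proposition \ref{15} with Proposition \ref{31}, the only task being to check that the hypotheses of the former translate correctly from the data provided here.

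First I would invoke the identification of the unit space $\mathcal{G}_{\varphi}^{(0)}$ with $X$, under which $C_0(\mathcal{G}_{\varphi}^{(0)}) = C_0(X)$ as $C^*$-subalgebras of $C^*_r(\mathcal{G}_{\varphi})$. Under this identification the text just before the proposition already notes that the aperiodic points of $\varphi$ correspond precisely to the elements of $\mathcal{G}_{\varphi}^{(0)}$ whose isotropy group in $\mathcal{G}_{\varphi}$ is trivial. The two standing hypotheses of the proposition, namely that $\alpha_t$ fixes $C_0(X)$ pointwise and that the aperiodic points are dense in $X$, are therefore exactly the two hypotheses required by Proposition \ref{15} applied to the \'etale groupoid $\mathcal{G}_{\varphi}$.

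Applying Proposition \ref{15} then yields a continuous homomorphism $c : \mathcal{G}_{\varphi} \to \mathbb{R}$ such that
\[
\alpha_t(g)(\xi) = e^{itc(\xi)} g(\xi)
\]
for all $t \in \mathbb{R}$, all $g \in C_c(\mathcal{G}_{\varphi})$ and all $\xi \in \mathcal{G}_{\varphi}$. Next I would feed this $c$ into Proposition \ref{31}, which produces a map $F : X \to \mathbb{R}$ that is continuous on $U$ and satisfies $c = c_F$. Substituting $c = c_F$ into the displayed equation gives exactly the conclusion of Proposition \ref{40}, with $F$ (or its restriction $F|_U$) being the desired continuous map on $U$.

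There is essentially no obstacle here: both inputs are already established in the paper, and the argument is a one-line synthesis. The only point deserving a sentence of care is the bookkeeping check that the identification $\mathcal{G}_{\varphi}^{(0)} \cong X$ matches the abelian subalgebra and the isotropy conditions appearing in Proposition \ref{15}; once that is stated, the rest is immediate.
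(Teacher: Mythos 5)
Your proposal is correct and is exactly the paper's argument: the authors obtain Proposition \ref{40} by noting that under the identification of $X$ with the unit space of $\mathcal G_{\varphi}$ the aperiodic points are precisely the units with trivial isotropy, and then combining Proposition \ref{15} with Proposition \ref{31}. Nothing further is needed.
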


For $n,m \in \mathbb N \cup \{0\}$, set
$$
\varphi^{-m}\left(\varphi^n(x)\right) = \begin{cases} \emptyset & \
  \text{when} \ x \notin D\left(\varphi^n\right) \\ \left\{ y \in
    D\left(\varphi^m\right) : \ \varphi^m (y) = \varphi^n(x) \right\} & \
  \text{when} \ x \in D(\varphi^n) . \end{cases}
$$
We say that $\varphi$ is \emph{minimal} when 
\begin{equation}\label{78}
\bigcup_{n,m \in \mathbb N \cup \{0\}} \varphi^{-m}\left(\varphi^n(x)\right)
\end{equation}
is dense in $X$ for all $x \in X$. Note that (\ref{78}) is the
orbit of $x$ under the action of $\mathcal G_{\varphi}$ on its unit
space. Thus $\varphi$ is minimal iff $\mathcal G_{\varphi}$ is.

\begin{prop}\label{41} Let $X$ be a locally compact second countable Hausdorff space,
  $U \subseteq X$ an open subset and $\varphi : U \to X$ a local
  homeomorphism. Assume that $\varphi$ is minimal and that there is at least one aperiodic point for
$\varphi$. Let $\alpha = (\alpha_t)_{t \in \mathbb R}$ be a
  continuous one-parameter group of automorphisms on $C^*_r(\mathcal
  G_{\varphi})$.

If, for some $\beta \neq 0$, there is a diagonal $\beta$-KMS weight for
$\alpha$, there is a continuous map $F : U \to \mathbb R$ such
that
\begin{equation}\label{42}
\alpha_t(g)(\xi) = e^{it c_F(\xi)} g(\xi)
\end{equation}
for all $t\in \mathbb R$, all $g \in C_c(\mathcal G_{\varphi})$ and all $\xi \in
\mathcal G_{\varphi}$. 

\end{prop}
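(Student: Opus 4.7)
The plan is to prove the proposition by directly combining Theorem~\ref{28} with Proposition~\ref{31}. The first step is to verify that the hypotheses of Theorem~\ref{28} hold for the groupoid $\mathcal G_\varphi$. Under the identification of $X$ with the unit space $\mathcal G_\varphi^{(0)}$, the aperiodic points of $\varphi$ correspond exactly to the units with trivial isotropy group (as noted just before Proposition~\ref{40}), so the assumption that there is at least one aperiodic point supplies the required unit with trivial isotropy. Likewise, minimality of $\varphi$ is minimality of $\mathcal G_\varphi$ in the sense required by Theorem~\ref{28}, as recorded in the paragraph preceding the proposition. Finally, $\mathcal G_\varphi$ is a locally compact second countable Hausdorff \'etale groupoid by its construction.

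Granted these verifications, applying Theorem~\ref{28} to $\alpha$ together with the given diagonal $\beta$-KMS weight (with $\beta \neq 0$) produces a continuous homomorphism $c : \mathcal G_\varphi \to \mathbb R$ such that
$$
\alpha_t(g)(\xi) = e^{it c(\xi)} g(\xi)
$$
for all $t\in \mathbb R$, $g \in C_c(\mathcal G_\varphi)$ and $\xi \in \mathcal G_\varphi$.

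The second step is to translate the homomorphism $c$ into data on the underlying space $X$. This is precisely Proposition~\ref{31}, which asserts that every continuous homomorphism $\mathcal G_\varphi \to \mathbb R$ has the form $c_F$ for some $F : X \to \mathbb R$ that is continuous on $U$. Substituting $c = c_F$ in the displayed formula above yields the desired identity (\ref{42}).

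I do not anticipate a substantive obstacle: both main ingredients have already been established earlier in the excerpt, and the argument reduces to matching hypotheses via the dictionary between $(X,\varphi)$ and $\mathcal G_\varphi$. The only minor point to keep in mind is that the notion of minimality used in Theorem~\ref{28} (density of $s(r^{-1}(y))$ for every $y$ in the unit space) is the one identified via (\ref{78}) with minimality of $\varphi$, but this identification has already been recorded in the discussion preceding the statement.
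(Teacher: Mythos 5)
Your proposal is correct and follows essentially the same route as the paper: the paper's one-line proof combines Corollary~\ref{77} with Proposition~\ref{40} (which is itself Proposition~\ref{15} plus Proposition~\ref{31}), using simplicity of $C^*_r(\mathcal G_\varphi)$, while you invoke Theorem~\ref{28} (which is Corollary~\ref{77} plus Proposition~\ref{15} plus the simplicity criterion) and then Proposition~\ref{31} --- the same ingredients, merely regrouped. The hypothesis-matching you carry out (aperiodic point $=$ unit with trivial isotropy, minimality of $\varphi$ $=$ minimality of $\mathcal G_\varphi$) is exactly what the paper relies on.
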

\begin{proof} In view of Corollary \ref{77} and Proposition \ref{40}
  it suffices to observe that $ C^*_r\left(\mathcal G_{\varphi}\right)$ is
  simple under the present assumptions, cf. Proposition 2.5 in
  \cite{Re2}.
\end{proof}

\subsection{A local homeomorphism from an infinite graph}

Let $G$ be a directed graph with vertexes $V$ and edges $E$. We assume that $G$ is countable in the sense that $V$
and $E$ are both countable sets. We let $r$ and $s$ denote the maps
$r: E \to V$ and $s: E \to V$ which associate to an edge $e\in E$ its
target vertex $r(e)$ and source vertex $s(e)$, respectively. A vertex
$v$ is an
\emph{infinite emitter} when $s^{-1}(v)$ contains infinitely many
edges and a \emph{sink} when $s^{-1}(v)$ is empty. The union of
sinks and infinite emitters constitute a set which will be denoted by
$V_{\infty}$. The graph $C^*$-algebra
$C^*(G)$ is by definition the universal
$C^*$-algebra generated by a collection $S_e, e \in E$, of partial
isometries and a collection $P_v, v \in V$, of mutually orthogonal projections subject
to the conditions that
\begin{enumerate}
\item[1)] $S^*_eS_e = P_{r(e)}, \ \forall e \in E$,
\item[2)] $\sum_{e \in F} S_eS_e^* \leq P_v$ for every finite subset
  $F \subseteq s^{-1}(v)$ and all $v\in V$, and
\item[3)] $P_v = \sum_{e  \in s^{-1}(v)} S_eS_e^*, \ \forall v \in V
  \backslash V_{\infty}$.
\end{enumerate} 
Let $P_f(G)$ and
$P(G)$ denote the set of finite and infinite paths in $G$,
respectively. The range and source maps, $r$ and $s$, extend in the
natural way to $P_f(G)$; the source map also to $P(G)$. Set
$\Omega_G = P(G) \cup Q(G)$,
where 
$$
Q(G) = \left\{p \in P_f(G): \ r(p) \in V_{\infty} \right\} 
$$ 
is the set of finite paths that terminate at a vertex
in $V_{\infty}$. In particular, $V_{\infty} \subseteq Q(G)$ because
vertexes are considered to be finite paths of length $0$. For any $p
\in P_f(G)$, let $|p|$ denote the length of $p$. When $|p| \geq 1$, set
$$
Z(p) = \left\{ q \in \Omega_G: \ |q| \geq |p| , \ q_i = p_i, \ i = 1,2,
  \cdots, |p| \right\},
$$
and
$$
Z(v) = \left\{ q \in \Omega_G : \ s(q) = v\right\}
$$
when $v \in V$. When $\nu \in P_f(G)$ and $F$ is a finite subset of $P_f(G)$, set
\begin{equation}\label{a6}
Z_F(\nu) = Z(\nu) \backslash \left(\bigcup_{\mu \in F} Z(\mu)\right) .
\end{equation}
The sets $Z_F(\nu)$ form a basis of compact and open subsets for a locally compact Hausdorff
topology on $\Omega_G$. When $\mu \in P_f(G)$ and $  x \in \Omega_G$, we can define the
concatenation $\mu x \in \Omega_G $ in the obvious way when $r(\mu) =
s(x)$. The groupoid $\mathcal G_G$ consists of the
elements in $\Omega_G \times \mathbb Z \times \Omega_G$ of the form
$$
(\mu x, |\mu| - |\mu'|, \mu'x),
$$
for some $x\in \Omega_G$ and some $ \mu,\mu' \in P_f(G)$. The product
in $\mathcal{G}_{G}$ is defined by
$$
(\mu x, |\mu| - |\mu'|, \mu' x)(\nu y, |\nu| -|\nu'|, \nu' y) = (\mu
x, \ |\mu | + |\nu| - |\mu'| - |\nu'|, \nu' y),
$$ 
when $\mu' x = \nu y$, and the involution by $(\mu x, |\mu| - |\mu'|,
\mu'x)^{-1} = (\mu' x, |\mu'| - |\mu|, \mu x)$. To describe the
topology on $\mathcal{G}_{G}$, let $Z_{F}(\mu)$ and $Z_{F'}(\mu')$ be two
sets of the form (\ref{a6}) with $r(\mu) = r(\mu')$. The topology we
shall consider has as a basis the sets of the form
\begin{equation}\label{top}
\left\{ (\mu x, |\mu| - |\mu'|, \mu' x) : \ \mu x \in Z_F(\mu), \
  \mu'x \in Z_{F'}(\mu') \right\} .
\end{equation}
With this topology $\mathcal{G}_{G}$ becomes an \'etale locally compact
second countable Hausdorff groupoid and we can consider the reduced $C^*$-algebra $C^*_r(\mathcal{G}_{G})$ as in
\cite{Re1}. As shown by Paterson in
\cite{Pa} there is an isomorphism $C^*(G) \to
  C^*_r(\mathcal G_G)$ which sends $S_e$ to $1_e$, where $1_e$ is the
  characteristic function of the compact and open set
$$
\left\{ (ex, 1, r(e)x) : \ x \in \Omega_G \right\} \ \subseteq \
\mathcal{G}_{G},
$$  
and $P_v$ to $1_v$, where $1_v$ is the characteristic function of the
compact and open set
$$
\left\{ (vx,0,vx) \ : \ x \in \Omega_G \right\} \ \subseteq \ \mathcal{G}_{G}.
$$ 
In the following we use the identification $C^*(G) = C_r^*(\mathcal
G_G)$ and identify $\Omega_G$ with the unit space of $\mathcal G_G$ via
the embedding
$\Omega_G \ni x \ \mapsto \ ( x, 0,x)$.

Note that $\Omega_G \backslash V_{\infty}$ is an open subset of
$\Omega_G$ and that we can define a local homeomorphism
$$
\sigma : \Omega_G \backslash V_{\infty} \to \Omega_G
$$
such that $\sigma$ is the usual left shift on $P(G)$, defined such
that $\sigma(x)_i = x_{i+1}$, while $\sigma(e_1e_2 \cdots e_n)$ is defined as follows when $e_1e_2
\cdots e_n \in Q(G)$:
$$
\sigma(e_1e_2 \cdots e_n) = \begin{cases} e_2e_3\cdots e_n,  & \
  \text{when} \ n \geq 2 \\ r(e_1), & \ \text{when}  \ n =
  1. \end{cases}
$$
It is straightforward to check that there is an identification
$$
\mathcal G_G = \mathcal G_{\sigma} ,
$$
as topological groupoids.
In particular, it follows that any continuous function $F : \Omega_G
\backslash V_{\infty} \to \mathbb R$ defines a continuous homomorphism 
$c_F : \mathcal G_G \to \mathbb R$ such that
$$
c_F  (\mu x, |\mu| - |\mu'|, \mu'x) = \sum_{n=0}^{|\mu|}
F\left(\sigma^n(\mu x)\right) - \sum_{n=0}^{|\mu'|}
F\left(\sigma^n(\mu'x)\right) .
$$
To simplify notation the one-parameter group $\sigma^{c_F}$ defined
from $c_F$ will be denoted by $\sigma^F$. It follows from Proposition \ref{31} that every continuous
homomorphism $\mathcal G_G \to \mathbb R$ arises from a continuous
function $F : \Omega_G \backslash V_{\infty} \to \mathbb R$ as above. We can therefore formulate
Corollary \ref{65} in the following way for graph $C^*$-algebras.

\begin{thm}\label{80} Let $F : \Omega_G \backslash V_{\infty}  \to
  \mathbb R$ be a continuous function. 
There is a bijective correspondence
  between the $\beta$-KMS weights for $\sigma^{F}$ on
  $C^*(G)$ and the pairs $\left(\mu,
  \left\{\varphi_x\right\}_{x \in \Omega_G}\right)$, where
$\mu$ is a regular Borel measure on $\Omega_G$ and
$\left\{\varphi_x\right\}_{x \in \Omega_G}$ is a
$\mu$-measurable field of states $\varphi_x$ on
$C^*_r(({\mathcal G_G})^x_x)$ such that
\begin{enumerate}
\item[a)] $\mu$ is $e^{\beta F}$-conformal for $\sigma$,
\item[b)] $\varphi_x(u_g) = \varphi_{r(h)}\left(u_{hgh^{-1}}\right)$
    for $\mu$-almost every $x\in \Omega_G$ and all $g\in
    ({\mathcal G_G})^x_x, \ h \in ({\mathcal G_G)}_x$, and
\item[c)] $\varphi_x(u_g)  = 0$ for $\mu$-almost every $x \in
  \Omega_G$ and all $g \in (\mathcal G_G)^x_x \backslash c_F^{-1}(0)$.
\end{enumerate}
The $\beta$-KMS weight $\phi$ corresponding to the pair $\left(\mu,
  \left\{\varphi_x\right\}_{x \in \Omega_G}\right)$ has the
properties that $C_c\left({\mathcal G_G}\right) \subseteq \mathcal
M_{\phi}$ and
\begin{equation}\label{66}
\phi(f) = \int_{\Omega_G}  \sum_{g \in ({\mathcal G_G})^x_x}
f(g)\varphi_x(u_g) \ d\mu(x) 
\end{equation}
when $f \in C_c(\mathcal G_G)$.
\end{thm}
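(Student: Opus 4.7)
The plan is to deduce Theorem \ref{80} directly from Corollary \ref{65} applied to the groupoid $\mathcal G_G = \mathcal G_\sigma$ with the homomorphism $c_F$. To do so, I need to verify the hypotheses of Corollary \ref{65} in this setting and then check that conditions (a)--(c) of that corollary translate into the conditions stated in Theorem \ref{80}.

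First, I would verify that the unit space $\Omega_G$ is totally disconnected: the sets $Z_F(\nu)$ form a basis of compact open sets, so this is immediate. Next, I would check that every isotropy group $(\mathcal G_G)^x_x$ is amenable. Under the identification $\mathcal G_G = \mathcal G_\sigma$, the isotropy at $x$ consists of elements $(x,n-m,x)$ with $\sigma^n(x)=\sigma^m(x)$, which forms a subgroup of $\mathbb Z$ under the map $(x,k,x)\mapsto k$. Hence each isotropy group is either trivial or isomorphic to $k\mathbb Z$ for some $k\geq 1$, and in particular is abelian and amenable. These two checks allow me to invoke Corollary \ref{65}.

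The third step is to reconcile the notions of quasi-invariance. Condition (a) of Corollary \ref{65} says that $\mu$ is quasi-invariant with Radon-Nikodym cocycle $e^{-\beta c_F}$ in the groupoid sense, meaning that for every open bisection $W\subseteq \mathcal G_G$,
\eq{\mu(s(W)) = \int_{r(W)} e^{\beta c_F(r_W^{-1}(x))}\,d\mu(x).}
I would show this is equivalent to $\mu$ being $e^{\beta F}$-conformal for the local homeomorphism $\sigma$ in the sense of \cite{Th3}. The key point is that basic open bisections of $\mathcal G_G$ are of the form (\ref{top}), and on such a bisection $c_F$ takes the explicit form coming from Birkhoff sums of $F$ along $\sigma$; a partition-of-unity argument combined with the definition of $\sigma$ reduces the general bisection identity to the conformality condition along the basic sets $Z_F(\nu)$. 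The remaining conditions (b) and (c) of Corollary \ref{65} transfer verbatim, since they only involve the isotropy groups and the cocycle $c_F$, and the integral formula (\ref{66}) is just the one from Corollary \ref{65} written in the present notation.

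The main obstacle I expect is not any serious technical step, but rather the careful bookkeeping needed to match the two formulations of quasi-invariance: one intrinsic to the groupoid and phrased via bisections, the other extrinsic and phrased directly in terms of $\sigma$ and $F$. Once this translation is made explicit on the basic bisections arising from pairs $(\mu,\mu')$ with $r(\mu)=r(\mu')$, the bijective correspondence and the formula (\ref{66}) follow at once from Corollary \ref{65}.
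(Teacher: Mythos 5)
Your proposal is correct and follows exactly the route the paper intends: Theorem \ref{80} is stated as a direct specialization of Corollary \ref{65} to $\mathcal G_G=\mathcal G_\sigma$ under the identification $C^*(G)=C^*_r(\mathcal G_G)$, and the three verifications you single out (total disconnectedness of $\Omega_G$, amenability of the isotropy groups as subgroups of $\mathbb Z$, and the translation of groupoid quasi-invariance with cocycle $e^{-\beta c_F}$ into $e^{\beta F}$-conformality for $\sigma$) are precisely the points that make the specialization legitimate. No gaps.
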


Similarly, for graph $C^*$-algebras our main result takes the
following form.

\begin{thm}\label{iff-graph} Let $G$ be a countable directed graph
  such that $C^*(G)$ is simple. Let $\alpha= (\alpha_t)_{t\in \mathbb R}$ be a continuous
  one-parameter group of automorphisms on $C^*(G)$ and
  assume that for some $\beta_0 \neq 0$ there is a $\beta_0$-KMS
  weight for
  $\alpha$.

The following are equivalent:
\begin{enumerate}
\item[1)]  There is a $\beta_1 \neq 0$ and a diagonal $\beta_1$-KMS
  weight for $\alpha$.
\item[2)] Whenever $\beta \neq 0$ and there is a $\beta$-KMS weight
  for $\alpha$, there is also a diagonal $\beta$-KMS weight for $\alpha$. 
\item[3)] $\alpha_t(f) = f$ for all $t \in \mathbb R$ and all $f\in
  C_0\left(\Omega_G\right)$.
\item[4)] There is a continuous function $F : \Omega_G \backslash
  V_{\infty} \to \mathbb R$ such that $\alpha = \sigma^{F}$.
\end{enumerate}
\end{thm}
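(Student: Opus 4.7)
The plan is to deduce Theorem~\ref{iff-graph} directly from the main theorem~\ref{iff} applied to the groupoid $\mathcal G_G$, using Proposition~\ref{31} to translate the abstract diagonality condition into the graph-theoretic one involving a continuous function $F$.

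The first step is to check that $\mathcal G_G$ satisfies every hypothesis of Theorem~\ref{iff}. It is locally compact, second countable, Hausdorff and \'etale by construction, and its unit space $\Omega_G$ is totally disconnected because the sets $Z_F(\nu)$ in (\ref{a6}) form a basis of compact open sets. Using the identification $\mathcal G_G = \mathcal G_\sigma$ recorded in the text and Proposition~2.5 of \cite{Re2}, simplicity of $C^*(G) = C^*_r(\mathcal G_G)$ translates into minimality of $\mathcal G_G$ together with density of the aperiodic points in $\Omega_G$; in particular, there exists a point in $\Omega_G$ with trivial isotropy. So Theorem~\ref{iff} is available for $\mathcal G_G$ and the given one-parameter group $\alpha$.

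Next I would match up the conditions. Statements 1), 2) and 3) of Theorem~\ref{iff-graph} are verbatim the statements 1), 2) and 3) of Theorem~\ref{iff}, so their mutual equivalence is immediate. To handle condition~4), I would run the implication $3) \Rightarrow 5)$ of Theorem~\ref{iff} to produce a continuous homomorphism $c:\mathcal G_G \to \mathbb R$ such that $\alpha_t(g)(\xi) = e^{itc(\xi)}g(\xi)$ for every $g \in C_c(\mathcal G_G)$ and $\xi \in \mathcal G_G$, and then invoke Proposition~\ref{31} with $U = \Omega_G\setminus V_\infty$ and the local homeomorphism $\sigma$ to realize $c$ as $c_F$ for some continuous $F: \Omega_G\setminus V_\infty \to \mathbb R$; this gives $\alpha = \sigma^{c_F} = \sigma^F$. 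The reverse implication $4) \Rightarrow 3)$ is routine: for any continuous $F$, the cocycle $c_F$ vanishes on the unit space $\Omega_G$, so $\sigma^F_t$ acts as the identity on $C_0(\Omega_G) \subseteq C^*_r(\mathcal G_G)$.

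I do not anticipate a genuine obstacle: Theorem~\ref{iff}, Corollary~\ref{75} and Proposition~\ref{31} already carry out all the substantive work, and the argument here is essentially a dictionary. The only external input needed is the standard equivalence between simplicity of $C^*(G)$, minimality of $\mathcal G_G$, and density of the aperiodic points under $\sigma$, which is provided by Proposition~2.5 of \cite{Re2}.
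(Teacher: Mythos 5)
Your proposal is correct and follows exactly the route the paper intends: Theorem \ref{iff-graph} is presented there as a direct translation of Theorem \ref{iff} via the identification $C^*(G)=C^*_r(\mathcal G_G)=C^*_r(\mathcal G_\sigma)$, with Proposition \ref{31} converting the continuous homomorphism $c$ into a continuous function $F$ and the standard simplicity criteria supplying minimality and the aperiodic point. Nothing essential is missing.
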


It follows from Theorem \ref{80} (and Proposition \ref{31}) that all
KMS weights for a diagonal action on the $C^*$-algebra of a graph
without loops are diagonal. This is not true in general; not even for finite strongly connected graphs as shown in
\cite{Th4}. However, we can now show that it holds for strongly
connected graphs when the function $F$ has bounded local variation in
the a sense we now make precise. 

Let $v$ be a vertex in $G$ and set 
\begin{equation*}
\Var_{n,v} (F) = \sup_{x,y}  \left|\sum_{j=0}^{n-1}
    F\left(\sigma^j(x)\right) - \sum_{j=0}^{n-1}
    F\left(\sigma^j(y)\right) \right|
\end{equation*}
where we take the supremum over all pairs $x,y \in P(G)$ with the
property that $x_i=y_i, \ i = 1,2,\cdots, n$, and $s(x_1) = s(y_1)
=v$. The following condition (\ref{001}) should be compared with \emph{Bowen's condition} used by
Walters, cf. \cite{W}.

\begin{prop}\label{gaugeinv} Let $G$ be a countable directed graph such that
  $C^*(G)$ is simple and let $F : \Omega_G \backslash V_{\infty} \to
  \mathbb R$ be a continuous function such that for some
  vertex $v$, 
\begin{equation}\label{001}
\sup_n \Var_{n,v} (F) < \infty .
\end{equation}
Then every KMS weight for $\sigma^F$ is
diagonal.
\end{prop}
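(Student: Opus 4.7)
\emph{Proof proposal.} Let $\phi$ be a $\beta$-KMS weight for $\sigma^F$ with $\beta \neq 0$, corresponding via Theorem~\ref{80} to a pair $(\mu, \{\varphi_x\}_{x \in \Omega_G})$. The plan is to show that the set of eventually periodic $x$ on which $\mu$ could place positive atomic mass consistently with non-diagonality is forced to have $\mu$-measure zero, by combining the $e^{\beta F}$-conformal relation, the bounded variation hypothesis, and simplicity of $C^*(G)$. Specifically, $\phi$ is diagonal iff $\varphi_x(u_g) = 0$ for $\mu$-a.e.\ $x$ and every non-identity $g \in (\mathcal G_G)^x_x$; and the isotropy $(\mathcal G_G)^x_x$ is non-trivial only when $x$ is an eventually periodic infinite path $x = \omega\eta^\infty$ with $\eta$ a loop of length $p$ at some vertex $u$, in which case $(\mathcal G_G)^x_x \cong \mathbb Z$ is generated by the natural shift-by-$p$ return, and $c_F$ on this generator equals the Birkhoff sum $b := \sum_{j=0}^{p-1} F(\sigma^j(\eta^\infty))$. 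Condition c) of Theorem~\ref{80} already forces $\varphi_x(u_g) = 0$ when $b \neq 0$; and the atomic consequence of conformality, $\mu(\{\omega \eta^\infty\}) = e^{-\beta \sum_{j=0}^{|\omega|-1} F(\sigma^j(\omega \eta^\infty))} \mu(\{\eta^\infty\})$, reduces matters to showing $\mu(\{y\}) = 0$ for the purely periodic $y = \eta^\infty$ whenever $b = 0$.

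First I would transfer the bounded variation at $v$ to the loop at $u$. Simplicity of $C^*(G)$ implies minimality of $\mathcal G_G$, so there is a finite path $\rho$ from $v$ to $u$; comparing $\rho \eta^n z$ with $\rho \eta^\infty$ (both based at $v$, agreeing in the first $|\rho|+np$ edges) and using continuity of $F$ at each of the $|\rho|$ points $\sigma^j(\rho \eta^\infty) \in \Omega_G \setminus V_\infty$ yields a constant $V < \infty$ with $|\sum_{j=0}^{np-1}F(\sigma^j(\eta^n z)) - nb| \leq V$ uniformly in $z \in Z(u)$ and all sufficiently large $n$. Combined with the conformal relation applied to the bisection $\{(\eta^n z, np, z) : z \in Z(u)\}$, this gives
\begin{equation*}
e^{-|\beta|V - \beta nb}\,\mu(Z(u)) \ \leq \ \mu(Z(\eta^n)) \ \leq \ e^{|\beta|V - \beta nb}\,\mu(Z(u))
\end{equation*}
for large $n$. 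Since $Z(u)$ is compact, $\mu(Z(u)) < \infty$, so $\beta b < 0$ is impossible; and $\beta b > 0$ forces $\mu(\{y\}) = \lim_n \mu(Z(\eta^n)) = 0$ using $Z(\eta^n) \searrow \{y\}$.

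The crucial case is $b = 0$, where condition c) provides no constraint on $\varphi_y$. Here I would use simplicity twice more. By condition (L), $\eta$ has an entry, and after possibly replacing $y$ with a shift $\sigma^i(y)$ (harmless by the atomic shift formula), one obtains an edge $e'$ with $s(e') = u$ and $e' \neq \eta_1$. Minimality of $\mathcal G_G$ also implies any non-zero $\mathcal G_G$-quasi-invariant measure on $\Omega_G$ has full support, so $\mu(Z(e')) > 0$. Applying the conformal relation to the bisection associated to $\eta^n e'$, together with the above variation bound and $b = 0$, yields the uniform positive lower bound
\begin{equation*}
\mu(Z(\eta^n e')) \ \geq \ e^{-|\beta|V}\,\mu(Z(e')) \ > \ 0
\end{equation*}
for all large $n$. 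But $Z(\eta^n e') \cap \{y\} = \emptyset$ (the $(np+1)$-th edge of any element of $Z(\eta^n e')$ is $e' \neq \eta_1$), so $Z(\eta^n e') \subseteq Z(\eta^n) \setminus \{y\}$, whence $\mu(Z(\eta^n e')) \leq \mu(Z(\eta^n)) - \mu(\{y\}) \to 0$ if $\mu(\{y\}) > 0$, contradicting the uniform lower bound. Thus $\mu(\{y\}) = 0$, and $\phi$ is diagonal.

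The main obstacle is the case $b = 0$: condition c) of Theorem~\ref{80} is silent when $c_F$ vanishes on the entire isotropy group, and the contradiction must be extracted by using both consequences of simplicity of $C^*(G)$ — minimality (for the extension of bounded variation to $u$ and for the full support of $\mu$) and condition (L) (for the entry edge of $\eta$) — in conjunction with the bounded variation hypothesis, squeezing $\mu(Z(\eta^n e'))$ between a positive constant and a sequence tending to zero.
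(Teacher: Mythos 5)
Your proof is correct, and its global strategy coincides with the paper's: reduce to showing that $\mu$ gives no mass to the (countably many) eventually periodic points, transfer the bounded-variation hypothesis from $v$ to the base vertex $u$ of the minimal loop, dispose of the case where the Birkhoff sum $b$ over one period is nonzero, and in the critical case $b=0$ use an exit from the loop together with conformality. The execution of that critical step is genuinely different in mechanism, though. The paper builds a second loop $\delta'$ of the same length based at $u$ and compares \emph{atoms}: each $\mu(\{\delta^k\delta' x_0\})$ is bounded below by a fixed positive multiple of $\mu(\{x_0\})$, and since the $y_k=\delta^k\delta'x_0$ are distinct points of the finite-measure set $Z(v)$, summability forces $\mu(\{x_0\})=0$. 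You instead take a single exit edge $e'$ and compare \emph{cylinders}: $\mu(Z(\eta^ne'))\geq e^{-|\beta|V}\mu(Z(e'))>0$ uniformly in $n$ (using full support of $\mu$, which you correctly deduce from minimality and quasi-invariance), while $Z(\eta^ne')\subseteq Z(\eta^n)\setminus\{y\}$ and $\mu(Z(\eta^n))\downarrow\mu(\{y\})$ force $\mu(Z(\eta^ne'))\to 0$. Your route needs slightly less graph combinatorics (an exit edge rather than a return loop of matched length) at the price of the full-support lemma, and it actually yields the stronger conclusion that when $b=0$ no KMS weight exists at all --- indeed the contradiction you reach is unconditional, since $\mu(Z(\eta^n))-\mu(\{y\})\to 0$ whether or not $\mu(\{y\})>0$, so the clause ``if $\mu(\{y\})>0$'' is superfluous (and the conclusion should be phrased as ``this case cannot occur'' rather than ``thus $\mu(\{y\})=0$''); this does not affect the validity of the proof. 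Finally, your use of condition c) of Theorem~\ref{80} to handle $b\neq 0$ is a legitimate shortcut the paper does not need, since there the atomic identity $\mu(\{x_0\})=e^{-\beta kb}\mu(\{x_0\})$ already kills the atom directly.
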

\begin{proof} The assumption that $C^*(G)$ is simple means that $G$ is
  cofinal in the sense used (e.g.) in \cite{Th3} and that every
  minimal loop
  in $G$ has an exit, cf. \cite{Sz}. It is easily seen that the set of
  vertexes $v$ for which (\ref{001}) holds
  is both hereditary and saturated. Under the present assumptions it
  will therefore hold for all $v$. Consider a $\beta$-KMS weight $\phi$
  and the pair $\left(\mu, \left\{\varphi_x\right\}_{x \in
      \Omega_G}\right)$ associated to it by Theorem \ref{80}. It
  suffices to show that the elements $x \in \Omega_G$ for which the
  isotropy group
  $\left(\mathcal G_G\right)^x_x$ is non-trivial is a null set with
  respect to $\mu$. The isotropy group of a point $x \in \Omega_G$ is
  non-trivial if and only if $x$ is an infinite pre-periodic path in
  $G$, and there are at most countably many such points. It suffices
  therefore to show that $\mu(\{x\}) = 0$ for any  infinite
  pre-periodic path $x$. There is an $m \in \mathbb N$ such that $x_0 = \sigma^m(x)$ is
  periodic. It
  follows from (\ref{conformal2}) that
$$
\mu(\{x\}) = e^{-\beta \sum_{j=0}^{m-1} F(\sigma^j(x))} \mu(\{x_0\}),
$$
so it suffices to show that $\mu(\{x_0\}) = 0$. 
Since $x_0$ is periodic there is a finite loop $\delta$ in
  $G$ such that $x_0 = \delta^{\infty}$, and since $G$ is cofinal and
  every loop in $G$ has an exit there is also a finite loop $\delta'$
  in $G$ such that $\delta' \nsubseteq x_0$ and $s(\delta') =
  s(\delta)$. By prolonging $\delta$ and $\delta'$ if necessary we may
  assume that the length of $\delta$ and $\delta'$ are the same, say
  $p$. For each $k \in \mathbb N$ set
$$
y_k = \delta^k \delta' x_0.
$$   
Since $x_0$ is $p$-periodic it follows
from (\ref{conformal2}) that 
$$
\mu\left(\{x_0\}\right) =  e^{-\beta \sum_{j=0}^{kp-1}
  F(\sigma^{j}(x_0))} \mu\left(\{x_0\}\right), 
$$ 
for all $k \in \mathbb N$, and the desired conclusion follows if $-\beta \sum_{j=0}^{kp-1}
  F(\sigma^{j}(x_0))$ is not zero for some $k$. Consider therefore now the
  case where $
-\beta \sum_{j=0}^{kp-1}
  F(\sigma^{j}(x_0)) = 0$ for all $k \in \mathbb N$. Since (\ref{001}) holds we find then that
\begin{equation}\label{con1}
 \left|\beta \sum_{j=0}^{kp-1}
    F\left(\sigma^j(y_k)\right)\right| =  \left|\beta \sum_{j=0}^{kp-1}
    F\left(\sigma^j(y_k)\right) - \beta \sum_{j=0}^{kp-1}
    F\left(\sigma^j(x_0)\right)\right| \leq |\beta|K 
\end{equation}
for all $k$, where $K = \sup_n \Var_{n,v} (F)$ and $v = s(\delta)$ is
the source of $\delta$. Now apply
(\ref{conformal2}) again to find that
$$
\mu \left(\left\{y_k\right\}\right) =  e^{-\beta \sum_{j=0}^{(k+1)p-1}
  F(\sigma^j(y_k))} \mu(\{x_0\}) .  
$$
Inserting (\ref{con1}) this leads to the conclusion that
$$
\mu \left(\left\{y_k\right\}\right) \geq e^{-|\beta|K} e^{-\beta
  \sum_{j=0}^{p-1} F\left(\sigma^j(z)\right)} \mu\left(\{x_0\}\right),
$$
where $z = \delta' x_0 = \sigma^{kp}(y_k)$. Since 
$$
 \sum_{k=1}^{\infty}
\mu\left(\left\{y_k\right\}\right) \leq \mu \left( Z(v)\right) < \infty,
$$
we conclude that $\mu\left(\{x_0\}\right) = 0$, as desired.    
  
\end{proof}

It follows from Proposition \ref{gaugeinv} that a generalized gauge
action on a graph $C^*$-algebra, considered for example in \cite{Th3},
where $F$ only depends on the first edge only has
gauge-invariant KMS weights, at least as long as the algebra is simple.

\begin{remark}\label{007} It should be emphasized that the conclusion
  in Proposition \ref{gaugeinv} does not hold without some condition
  on $F$. To see this observe that the example presented in \cite{Th4}
  shows that already for the canonical finite graph $G$ for which
  $C^*(G)$ is a copy of the Cuntz algebra $O_2$, namely the graph consisting of one vertex and
  two arrows, there are continuous non-negative functions $F : \Omega_G
  \to \mathbb R$ such that $\sigma^F$ admits non-diagonal KMS
  states. In the example from \cite{Th4} there is at least a single
  extremal KMS state which is diagonal, namely the extremal KMS-state
  corresponding to the lowest inverse temperature $\beta_0$. Here we want
  to indicate how to modify the example in \cite{Th4} to get an
  example where no \emph{extremal} KMS state is diagonal. The basis
  for this is a sequence $\{b_n\}_{n=1}^{\infty}$ of positive
  numbers with the following properties:
\begin{enumerate}
\item[a)] $b_{n} \geq b_{n+1} \ \forall n$,
\item[b)] $\lim_{n \to \infty} \frac{b_{n+1}}{b_n} = 1$,
\item[c)] $\sum_{n=1}^{\infty} b_n < 1$, and
\item[d)] $\sum_{n=1}^{\infty} b_n^s = \infty$ for all $s < 1$.
\end{enumerate}  
We leave the reader to verify the existence of such a sequence. Set
$a_1 = -\log b_1$ and $a_k = \log b_{k-1} - \log b_{k}, \ k \geq 2$, and
identify the infinite path space $\Omega_G$ with $\{0,1\}^{\mathbb N}$
by labelling the two arrows in $G$ by $0$ and $1$. Define then 
$T : \{0,1\}^{\mathbb N} \to \mathbb R$ such that $T\left((x_i)_{i=1}^{\infty}
\right) = a_k$ where $k = \min \{i : \ x_i = 0\}$ when
$(x_i)_{i=1}^{\infty} \neq 1^{\infty}$, and set $T(1^{\infty}) =
0$. (As in \cite{Th4} $1^{\infty}$ is the infinite string of $1$'s.) This
is a continuous non-negative function. By
using Theorem 2.2 in \cite{Th4} and arguing exactly as in Section 3 of
\cite{Th4}, but with the sequence $\left\{n^{-1}\right\}$ replaced by $\{a_n\}$,
it follows that there are $\beta$-KMS states for $\sigma^T$ if and
only if $\beta \geq 1$, and for each $\beta \geq 1$ the extremal KMS
states are parametrised by the circle, and none are diagonal. As guaranteed by Theorem \ref{iff-graph} there are for each $\beta \geq
1$ also one which is diagonal. As explained in \cite{Th4} it arises by
integrating the extremal ones with respect to Lebesgue measure on the circle.

\end{remark}

\end{document}